\theoremstyle{plain}
\newtheorem{Thm}{Theorem}[section]
\newtheorem{Lemma}[Thm]{Lemma}
\newtheorem{Cor}[Thm]{Corollary}
\newtheorem{definition}[Thm]{Definition}
\newtheorem*{Thm*}{Theorem}
\theoremstyle{remark}
\newtheorem{Rmk}[Thm]{\bf{Remark}}
\journal{....}
\begin{document}
	\begin{frontmatter}
		\title{A Condition for Blow-up solutions to Discrete $p$-Laplacian Parabolic Equations under the mixed boundary conditions on Networks\\}
		
		\author[a,b]{Soon-Yeong Chung}
		\ead{sychung@sogang.ac.kr}
		\author[b]{Min-Jun Choi}
		\ead{dudrka2000@sogang.ac.kr}
		\author[b]{Jaeho Hwang\corref{cjhp}}
		\ead{hjaeho@sogang.ac.kr}

		\cortext[cjhp]{Corresponding author}
		\address[a]{National Institute for Mathematical Sciences, Daejeon 34047, Republic of Korea}
		\address[b]{Department of Mathematics, Sogang University, Seoul 04107, Republic of Korea}

		\begin{abstract}
			The purpose of this paper is to investigate a condition
			\begin{center}
				$(C_{p})$ $\hspace{1cm} \alpha \int_{0}^{u}f(s)ds \leq uf(u)+\beta u^{p}+\gamma,\,\,u>0$
			\end{center}
			for some $\alpha>2$, $\gamma>0$, and $0\leq\beta\leq\frac{\left(\alpha-p\right)\lambda_{p,0}}{p}$, where $p>1$ and $\lambda_{p,0}$ is the first eigenvalue of the discrete $p$-Laplacian $\Delta_{p,\omega}$. Using the above condition, we obtain blow-up solutions to discrete $p$-Laplacian parabolic equations
			\begin{equation*}
			\begin{cases}
			u_{t}\left(x,t\right)=\Delta_{p,\omega}u\left(x,t\right)+f(u(x,t)), & \left(x,t\right)\in S\times\left(0,+\infty\right),\\
			\mu(z)\frac{\partial u}{\partial_{p} n}(x,t)+\sigma(z)|u(x,t)|^{p-2}u(x,t)=0, & \left(x,t\right)\in\partial S\times\left[0,+\infty\right),\\
			u\left(x,0\right)=u_{0}\geq0(nontrivial), & x\in S,
			\end{cases}
			\end{equation*}
			on a discrete network $S$, where $\frac{\partial u}{\partial_{p}n}$ denotes the discrete $p$-normal derivative. Here, $\mu$ and $\sigma$ are nonnegative functions on the boundary $\partial S$ of $S$, with $\mu(z)+\sigma(z)>0$, $z\in \partial S$. In fact, it will be seen that the condition $(C_{p})$, the generalized version of the condition $(C)$, improves the conditions known so far.
		\end{abstract}
		
		\begin{keyword}
			discrete p-Laplacian, semilinear parabolic equation, blow-up
			\MSC [2010] 39A12 \sep 35F31 \sep 35K91 \sep 35K57 
		\end{keyword}
	\end{frontmatter}

\section{Introduction}

These days, the discrete version of differential equations has attracted many researcher's attention.  In particular, $p$-Laplacian $\Delta_{p,\omega}$ on networks(or weighted graphs) is used to observe various social and scientific phenomena(see \cite{ELB}-\cite{CG} and references therein), which is modeled by discrete $p$-Laplacian parabolic equations
\begin{equation*}
u_{t}\left(x,t\right)=\Delta_{p,\omega}+u^{q}\left(x,t\right),\,\, (x,t)\in S\times(0,\infty)
\end{equation*}
with some boundary and initial conditions where $S$ is the set of chemicals and $p>1$. Here, $\Delta_{p,\omega}$ is the discrete $p$-Laplace operator on $S$, defined by
\begin{equation*}
{\displaystyle \Delta_{p,\omega}u(x,t):= \sum_{x\in\overline{S}}\left| u\left(y,t\right)-u\left(x,t\right) \right|^{p-2}\left[u\left(y,t\right)-u\left(x,t\right)\right]\omega\left(x,y\right)}.
\end{equation*}

From a similar point of view, we discuss, in this paper, the blow-up property of solutions to the following discrete $p$-Laplacian parabolic equations
\begin{equation}\label{equation}
\begin{aligned}
\begin{cases}
	u_{t}(x,t) = \Delta_{p,\omega}u(x,t) + f(u(x,t)), & (x,t)\in S\times (0, +\infty), \\
	B[u]=0, & \text{on}\,\,\partial S\times [0,+\infty), \\
	u(x, 0)=u_{0}(x)\geq 0, & x\in \overline{S},
\end{cases}
\end{aligned}
\end{equation}
where $p>1$, $f$ is locally Lipschitz continuous on $\mathbb{R}$, and $B[u]=0$ on $\partial S\times [0,+\infty)$ stands for the boundary condition
\begin{equation}\label{boundary condition}
	\mu(z)\frac{\partial u}{\partial_{p} n}(z,t)+\sigma(z)|u(z,t)|^{p-2}u(z,t)=0,\,\,\left(z,t\right)\in\partial S\times\left[0,+\infty\right).
\end{equation}
Here, $\mu,\sigma:\partial S \rightarrow [0,+\infty)$ are functions with $\mu(z)+\sigma(z)>0$, $z\in \partial S$ and $\frac{\partial u}{\partial_{p} n}$ denotes the discrete $p$-normal derivative (which is introduced in Section 1). It is easy to see that this boundary value problem includes the various
boundary value problems such as the Dirichlet boundary, Neumann boundary, Robin boundary, and so on. We note here that one of the meaning of our result is an unified approach.

The continuous case of this equation with some boundary conditions has been studied by many authors. For example, in $1973$, Levine \cite{L} considered the formally parabolic equations of the form
\[
\begin{cases}
P\frac{du}{dt}=-A(t)u + f(u(t)), & t\in [0, +\infty),\\
u(0)=u_{0},
\end{cases}
\]
where $P$ and $A(t)$ are positive linear operators defined on a dense subdomain $D$ of a real or complex Hilbert space $H$. Here, he first introduced "the concavity method" to obtained the blow-up solutions, under abstract conditions
\[
\begin{aligned}
2(\alpha+1)F(x)\leq (x,f(x)),\,\,F(u_{0}(x))>\frac{1}{2}(u_{0}(X),Au_{0}(x))
\end{aligned}
\]
for every $x\in D$, where $F(x)=\int_{0}^{1}(f(\rho x),x)d\rho$.

After this, Philippin and Proytcheva \cite{PP} have applied the above method to the equations
\begin{equation}\label{IBBequatio}
\begin{cases}
u_{t}=\Delta u + f(u), & \hbox{in}\,\,\Omega\times(0,+\infty),\\
u(x,t)=0, & \hbox{on}\,\, \partial \Omega\times(0,+\infty),\\
u(x,0)=u_{0}(x)\geq0,
\end{cases}
\end{equation}
and obtained a blow-up solution, under the condition
\[
(A)\,:\hspace{2mm} (2+\epsilon)F(u)\leq uf(u),\,\,u>0
\]
and the initial data $u_{0}$ satisfying
\[
-\frac{1}{2}\int_{\Omega}|\nabla u_{0}(x)|^{2}dx + \int_{\Omega}F(u_{0}(x))dx>0.
\]
Besides, in \cite{PPP, PS} Payne et al. obtained the blow-up solutions to the equations
\begin{equation}\label{IBBequation}
\begin{cases}
u_{t}=\Delta u - g(u), & \hbox{in}\,\,\Omega\times(0,+\infty),\\
\frac{\partial u}{\partial n}=f(u), & \hbox{on}\,\, \partial \Omega\times(0,+\infty),\\
u(x,0)=u_{0}(x)\geq0,
\end{cases}
\end{equation}
when the Neumann boundary data $f$ satisfies the condition $(A)$.

Recently, Ding and Hu \cite{DH} adopted the condition $(A)$ to get blow-up solutions to the equation
\[
(g(u))_{t}=\nabla\cdot(\rho(|\nabla u|^{2})\nabla u) + k(t)f(u)
\]
with the nonnegative initial value and the null Drichlet boundary condition.

On the other hands, the condition (A) was relaxed by Bandle and Brunner \cite{BB} as follows:
\[
\begin{aligned}
\hbox{(B) \hspace{2mm} $(2+\epsilon)F(u)\leq uf(u) + \gamma ,\,\,u>0$}
\end{aligned}
\]
and the initial data $u_{0}$ satisfying
\[
-\frac{1}{2}\int_{\Omega}|\nabla u_{0}(x)|^{2}dx+\int_{\Omega}[F(x,u_{0})-\gamma]dx>0,
\]
for some $\epsilon>0$ and $\gamma>0$. \\

Finally, the condition $(B)$ was developed by Chung and Choi \cite{CC2} as follows:
\[
\begin{aligned}
\hbox{(C) \hspace{2mm} $(2+\epsilon)F(u)\leq uf(u)+\beta u^{2} + \gamma ,\,\,u>0$}
\end{aligned}
\]
and the initial data $u_{0}$ satisfying
\[
-\frac{1}{2}\sum_{x\in\overline{S}}|\nabla u_{0}(x)|^{2}+\sum_{x\in\overline{S}}[F(x,u_{0})-\gamma]dx>0,
\]
for some $\epsilon>0$, $0< \beta\leq \frac{\epsilon \lambda_{0}}{2}$, and $\gamma>0$. Here, $\lambda_{0}$ denotes the first eigenvalue of the discrete Laplace operator $\Delta_{\omega}$.

It is easy to see that the conditions $(A)$ and $(B)$ above are independent of the eigenvalue of Laplace operator which depends on the domain and the condition $(C)$ is depend on the eigenvalue.

From this point of view, we generalized the condition $(C)$ with respect to discrete $p$-Laplace operator $\Delta_{p,\omega}$, which is the main results of this paper, will be introduced as follows: for some $\alpha>2$, $\beta\geq 0$, and $\gamma>0$,
\[
\hbox{$(C_{p})$\hspace{3mm} $\alpha F\left(u\right)\leq uf(u) + \beta u^{p} + \gamma,\,\,u>0$},
\]
where $0\leq\beta\leq\frac{\left(\alpha-p\right)\lambda_{p,0}}{p}$, $p>1$, and $\lambda_{p,0}$ is the first eigenvalue of the discrete $p$-Laplacian $\Delta_{p,\omega}$. Here, we note that the term $\beta u^{p}$ is depending on the domain graph.

From this observation, we may understand the condition $(A)$ and $(B)$ with respect to the $p$-Laplace operator $\nabla(|\nabla u|^{p-2}\nabla u)$ as follows: for $p>1$,
\begin{equation*}
\begin{aligned}
&(A_{p})\hspace{3mm} \alpha F(u)\leq uf(u),\,\hspace{1mm}\,u>0,\\
&(B_{p}) \hspace{3mm}\alpha F(u)\leq uf(u)+\gamma,\hspace{1mm}\,\,u>0,
\end{aligned}
\end{equation*}
for some $\alpha>p$ with $\alpha>2$ and $\gamma>0$. Above conditions $(A_{p})$, $(B_{p})$, and $(C_{p})$ are discussed in Section $3$.

As far as the authors know, it seems that there have been no paper which deal with the blow-up solutions to the equation \eqref{equation} for $1<p<2$ in the discrete case, not even in the continuous case.

In fact, it is expected that, with the condition $(C_{p})$, more interesting results should be obtained even in the continuous case, which will be our forth-coming work.

We organize this paper as follows: in Section \ref{preli}, we introduce briefly the preliminary concepts on networks and comparison principles. Section \ref{BCM} is the main section, which is devoted to blow-up solutions using the concavity method with the condition $(C_{p})$. Finally in Section 3, we discuss the condition $(C_{p})$, comparing with the conditions $(A_{p})$ and $(B_{p})$, together with the condition   $B(0)>0$ for the initial data.


\section{Preliminaries and Discrete Comparison Principles}\label{preli}
In this section, we start with the theoretic graph notions frequently used throughout this paper. For more detailed information on notations, notions, and conventions, we refer the reader to \cite{CB}.

\begin{definition}
\begin{enumerate}[(i)]
\item  A graph $G=G\left(V,E\right)$ is a finite set $V$ of $vertices$ with a set $E$ of $edges$ (two-element subsets of $V$). Conventionally used, we denote by $x\in V$ or $x\in G$ the fact that $x$ is a vertex in $G$.
\item A graph $G$ is called $simple$ if it has neither multiple edges nor loops
\item $G$ is called $connected$ if for every pair of vertices $x$ and $y$, there exists a sequence(called a $path$) of vertices $x=x_{0},x_{1},\cdots,x_{n-1},x_{n}=y$ such that $x_{j-1}$ and $x_{j}$ are connected by an edge(called $adjacent$) for $j=1,\cdots,n$.
\item A graph $G'=G'\left(V',E'\right)$ is called a $subgraph$ of $G\left(V,E\right)$ if $V'\subset V$ and $E'\subset E$. In this case, $G$ is a host graph of $G'$. If $E'$ consists of all the edges from $E$ which connect the vertices of $V'$ in its host graph $G$, then $G'$ is called an induced subgraph.
\end{enumerate}
\end{definition}
 We note that an induced subgraph of a connected host graph may not be connected.

Throughout this paper, all the subgraphs are assumed to be induced, simple and connected.

\begin{definition}
	For an induced subgraph $S$ of a graph $G=G\left(V,E\right)$, the (vertex) $boundary$ $\partial S$ of $S$ is defined by
	\[
	\partial S:=\{z\in V\setminus S\,|\, \hbox{$z\sim y$ for some $y\in S$}\}.
	\]
\end{definition}
Also, we denote by $\overline{S}$ a graph whose vertices and edges are in $S\cup\partial S$. We note that by definition the set, $\overline{S}$ is an induced subgraph of $G$.

\begin{definition}
A $weight$ on a graph $G$ is a symmetric function $\omega\,:\,V\times V\rightarrow\left[0,+\infty\right)$ satisfying the following:
\begin{enumerate}[(i)]
\item $\omega\left(x,x\right)=0$, \,\,$x\in V$,
\item $\omega\left(x,y\right)=\omega\left(y,x\right)$ if $x\sim y$,
\item $\omega\left(x,y\right)>0$ if and only if $\{x,y\}\in E$,
\end{enumerate}
and a graph $G$ with a weight $\omega$ is called a $network$.
\end{definition}

\begin{definition}
The \emph{degree} $d_{\omega}x$ of a vertex $x$ in a network $S$ (with boundary $\partial S$) is defined by
\[
d_{\omega}x:=\sum_{y\in\overline{S}}\omega\left(x,y\right).
\]
\end{definition}

\begin{definition}
For $p>1$ and a function $u\,:\,\overline{S}\rightarrow\mathbb{R}$, the discrete $p$-Laplacian $\Delta_{p,\omega}$ on $S$ is defined by
\[
\Delta_{p,\omega}u\left(x\right):=\sum_{y\in\overline{S}}\left\vert u\left(y\right)-u\left(x\right)\right\vert^{p-2}\left[u\left(y\right)-u\left(x\right)\right]\omega\left(x,y\right)
\]
for $x\in S$.
\end{definition}

\begin{definition}
	For $p>1$ and a function $u\,:\,\overline{S}\rightarrow\mathbb{R}$, the discrete $p$-normal derivative $\frac{\partial u}{\partial_{p} n}$ on $\partial S$ is defined by
	\[
	\frac{\partial u}{\partial_{p} n}(z):=\sum_{x\in S}\left\vert u(z)-u(x)\right\vert ^{p-2}[u(z)-u(x)]\omega(x,z)
	\]
	for $z\in \partial S$.
\end{definition}

The following two lemmas are used throughout this paper.
\begin{Lemma}[See \cite{KC}]\label{laplace}
Let $p>1$. For functions $f,~g\,:\,\overline{S}\to\mathbb{R}$,
the discrete $p$-Laplacian $\Delta_{p,\omega}$ satisfies that
\begin{equation*}
\begin{aligned}
&2\sum_{x\in\overline{S}}g\left(x\right)\left[-\Delta_{p,\omega}f\left(x\right)\right]
\\&=\sum_{x,y\in\overline{S}}\left\vert f\left(y\right)-f\left(x\right)\right\vert^{p-2}\left[f\left(y\right)-f\left(x\right)\right]\left[g\left(y\right)-g\left(x\right)\right]\omega\left(x,y\right).	
\end{aligned}
\end{equation*}

In particular, in the case $g=f$, we have
$$
2\sum_{x\in\overline{S}}f\left(x\right)\left[-\Delta_{p,\omega}f\left(x\right)\right]=\sum_{x,y\in\overline{S}}\left\vert f\left(x\right)-f\left(y\right)\right\vert^{p}\omega\left(x,y\right).
$$
\end{Lemma}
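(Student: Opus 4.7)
The plan is a direct discrete summation-by-parts argument. First, I would rewrite the left-hand side using the definition of $\Delta_{p,\omega}$ (interpreting the same formula for $x \in \overline{S}$, not just for $x \in S$), obtaining
$$
\sum_{x \in \overline{S}} g(x)\,[-\Delta_{p,\omega} f(x)] = \sum_{x,y \in \overline{S}} g(x)\,|f(y)-f(x)|^{p-2}\,[f(x)-f(y)]\,\omega(x,y),
$$
where the inner sign flip comes from the ``minus'' in front of the $p$-Laplacian.

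The main step is to exploit two symmetries of the summand. Relabeling the dummy indices $x \leftrightarrow y$ in the above double sum, then invoking the symmetry $\omega(y,x) = \omega(x,y)$ of the weight together with the evenness $|f(x)-f(y)|^{p-2} = |f(y)-f(x)|^{p-2}$, produces the equivalent expression
$$
\sum_{x \in \overline{S}} g(x)\,[-\Delta_{p,\omega} f(x)] = \sum_{x,y \in \overline{S}} g(y)\,|f(y)-f(x)|^{p-2}\,[f(y)-f(x)]\,\omega(x,y).
$$

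Adding the two expressions and using the elementary identity
$$
g(x)[f(x)-f(y)] + g(y)[f(y)-f(x)] = [f(y)-f(x)]\,[g(y)-g(x)]
$$
immediately yields the claimed formula with the factor $2$ on the left. The special case $g=f$ then follows from $|f(y)-f(x)|^{p-2}[f(y)-f(x)]^{2} = |f(y)-f(x)|^{p}$. There is no serious obstacle here; the only point requiring care is tracking signs during the relabeling, so that symmetrization produces the difference $[g(y)-g(x)]$ rather than a sum, which is precisely what makes the identity close.
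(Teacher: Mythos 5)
Your proof is correct. The paper itself gives no argument for this lemma (it is quoted from the reference [KC]), and your symmetrization/summation-by-parts argument --- swap the dummy indices, use the symmetry of $\omega$ and the evenness of $|\cdot|^{p-2}$, and add --- is exactly the standard proof of this discrete Green-type identity; the only points worth flagging are the ones you already note (the $p$-Laplacian must be read as defined by the same formula at boundary vertices so that the sum over $\overline{S}$ makes sense) plus the usual convention that $|t|^{p-2}t:=0$ at $t=0$ when $1<p<2$.
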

\begin{Lemma}[See \cite{CH}]\label{eigenvalue}

For $p>1$, there exist $\lambda_{p,0}>0$ and a function $\phi_{0}\left(x\right)>0$, $x\in S\cup\Gamma$
such that
\[
\begin{cases}
-\Delta_{p,\omega}\phi_{0}\left(x\right)=\lambda_{p,0}|\phi_{0}(x)|^{p-2}\phi_{0}\left(x\right), & x\in S,\\
B[\phi_{0}]=0, & \text{on} \hspace{2mm}\partial S,
\end{cases}
\]
where $B[\phi_{0}]$ on $\partial S$ stands for
\begin{equation*}
	\mu(z)\frac{\partial \phi_{0} }{\partial_{p} n}(z)+\sigma(z)|\phi_{0}(z)|^{p-2}\phi_{0}(z),\,\,z\in \partial S.
\end{equation*}
Here, $\Gamma:=\{z\in \partial S\,|\,\mu(z)>0 \}$ and $\mu,\sigma:\partial S\rightarrow[0,+\infty)$ are functions with $\mu(z)+\sigma(z)>0$ for all $z\in \partial S$. Moreover, $\lambda_{p,0}$ is given by
\[
\begin{aligned}
\lambda_{p,0} =& \min_{u\in\mathcal{A},u\not\equiv0}\frac{\frac{1}{2}{\displaystyle\sum_{x,y\in\overline{S}}}\left\vert u\left(x\right)-u\left(y\right)\right\vert^{p}\omega\left(x,y\right)+\displaystyle\sum_{z\in\Gamma}\frac{\sigma(z)}{\mu(z)}|u(z)|^{p}}{{\displaystyle\sum_{x\in S}}\left|u\left(x\right)\right|^{p}}\\ \leq& d_{\omega}x,\,\,x\in S,
\end{aligned}
\]
where $\mathcal{A}:=\left\{ u\,:\,\overline{S}\rightarrow\mathbb{R}\,|\, u\not\equiv 0\,\,\text{in}\,\, S,\,\,u=0,\,\,\text{on}\,\,\partial S\setminus \Gamma\right\}$.
\end{Lemma}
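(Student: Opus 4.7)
The plan is to obtain $\lambda_{p,0}$ as the minimum of the Rayleigh-type quotient
\[
R(u):=\frac{\frac{1}{2}\sum_{x,y\in\overline{S}}|u(x)-u(y)|^{p}\omega(x,y)+\sum_{z\in\Gamma}\frac{\sigma(z)}{\mu(z)}|u(z)|^{p}}{\sum_{x\in S}|u(x)|^{p}}
\]
over the admissible class $\mathcal{A}$, and then read off the eigenvalue equation, the strict positivity of the minimizer, and the estimate $\lambda_{p,0}\le d_{\omega}x$. Since $R$ is $0$-homogeneous I would restrict to $\mathcal{A}_{1}:=\{u\in\mathcal{A}:\sum_{x\in S}|u(x)|^{p}=1\}$, a compact subset of the finite-dimensional space of real-valued functions on $\overline{S}$ vanishing on $\partial S\setminus\Gamma$; continuity of $R$ then delivers a minimizer $\phi_{0}\in\mathcal{A}_{1}$, which can be taken nonnegative via the identity $R(|u|)=R(u)$. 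To see $\lambda_{p,0}:=R(\phi_{0})>0$, note that $R(\phi_{0})=0$ would force every edge difference of $\phi_{0}$ to vanish; combined with $\phi_{0}=0$ on $\partial S\setminus\Gamma$ and the connectedness of $\overline{S}$, this would yield $\phi_{0}\equiv 0$ on $S$, contradicting the normalization.

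Next I would derive the eigenvalue equation by a Lagrange-multiplier calculation: testing against variations $v:\overline{S}\to\mathbb{R}$ with $v=0$ on $\partial S\setminus\Gamma$ and invoking Lemma \ref{laplace} to split the resulting bilinear form into an interior sum over $S$ against $-\Delta_{p,\omega}\phi_{0}$ and a boundary sum over $\Gamma$ against $\frac{\partial \phi_{0}}{\partial_{p} n}$, one obtains $-\Delta_{p,\omega}\phi_{0}=\lambda_{p,0}\phi_{0}^{p-1}$ on $S$ and $\mu(z)\frac{\partial \phi_{0}}{\partial_{p} n}(z)+\sigma(z)\phi_{0}(z)^{p-1}=0$ on $\Gamma$, the identity being automatic on $\partial S\setminus\Gamma$ since $\mu=0$ and $\phi_{0}=0$ there. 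Strict positivity of $\phi_{0}$ on $S\cup\Gamma$ then follows from the equations themselves: at $x\in S$ with $\phi_{0}(x)=0$, the interior equation reduces to $\sum_{y\in\overline{S}}\phi_{0}(y)^{p-1}\omega(x,y)=0$, forcing $\phi_{0}$ to vanish on every neighbor of $x$ and, by connectedness, on all of $S$; at $z\in\Gamma$ with $\phi_{0}(z)=0$, the boundary identity together with $\mu(z)>0$ reduces to the previous case.

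Finally, testing $R$ against the characteristic function $\chi_{\{x\}}\in\mathcal{A}$ for a fixed $x\in S$ gives $R(\chi_{\{x\}})=d_{\omega}x$, and hence $\lambda_{p,0}\le d_{\omega}x$. The main obstacle I anticipate is the strict positivity step, especially on $\Gamma$, which requires carefully combining the boundary identity, the definition $\Gamma=\{\mu>0\}$, and the interior propagation via connectedness; the remaining steps are a standard compactness-plus-variational argument in the finite-dimensional discrete $p$-Laplacian setting.
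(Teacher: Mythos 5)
The paper itself offers no proof of this lemma --- it is imported verbatim from the preprint \cite{CH} --- so there is nothing in-text to compare against; your variational scheme (minimize the Rayleigh quotient, extract the Euler--Lagrange system via Lemma \ref{laplace}, propagate positivity of the minimizer through connectedness, and test with $\chi_{\{x\}}$ to get $\lambda_{p,0}\le d_{\omega}x$) is the natural and essentially correct route, and the Euler--Lagrange, eigenfunction-positivity, and $d_{\omega}x$ steps go through. One small repair first: $\mathcal{A}_{1}$ is \emph{not} compact, because the normalization $\sum_{x\in S}|u(x)|^{p}=1$ constrains only the values on $S$ while the values on $\Gamma$ remain free, so $\mathcal{A}_{1}$ is an unbounded cylinder. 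Existence of a minimizer still holds, but you must invoke coercivity: each $z\in\Gamma$ has a neighbour $x\in S$ with $|u(x)|\le 1$ under the normalization, so the term $\tfrac{1}{2}\cdot 2\,|u(z)-u(x)|^{p}\omega(x,z)$ forces the numerator to blow up as $|u(z)|\to\infty$, and hence a minimizing sequence has bounded $\Gamma$-values. (Also, $R(|u|)\le R(u)$ by the reverse triangle inequality rather than $R(|u|)=R(u)$; the inequality is all you need to take a nonnegative minimizer.)

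The more serious gap is the claim $\lambda_{p,0}>0$, which cannot be proved in the stated generality, and your argument for it breaks exactly where the claim fails. If $\sigma\equiv 0$, then $\mu>0$ on all of $\partial S$, so $\Gamma=\partial S$ and $\partial S\setminus\Gamma=\emptyset$; the constant function $u\equiv 1$ then belongs to $\mathcal{A}$ and gives $R(u)=0$, whence $\lambda_{p,0}=0$. Your deduction ``all edge differences vanish, plus $\phi_{0}=0$ on $\partial S\setminus\Gamma$, plus connectedness, implies $\phi_{0}\equiv 0$'' silently requires a pinning vertex: either $\partial S\setminus\Gamma\neq\emptyset$, or some $z\in\Gamma$ with $\sigma(z)>0$, in which case the vanishing of $\sum_{z\in\Gamma}\frac{\sigma(z)}{\mu(z)}|\phi_{0}(z)|^{p}$ forces $\phi_{0}(z)=0$ there and the propagation proceeds. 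This is consistent with the Remark immediately following the lemma, which records $\lambda_{p,0}=0$ precisely when $\sigma\equiv 0$; so you should either add the hypothesis $\sigma\not\equiv 0$ to obtain strict positivity, or prove only $\lambda_{p,0}\ge 0$, which is all the rest of the paper actually uses.
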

In the above, the number $\lambda_{p,0}$ is called the first eigenvalue of $\Delta_{p,\omega}$ on a network $\overline{S}$ with corresponding eigenfunction $\phi_{0}$ (see \cite{C} and \cite{CDS} for the spectral theory of the Laplacian operators). In fact, we note that if $\Gamma$ is empty set, then $\sum_{z\in\Gamma}\frac{\sigma(z)}{\mu(z)}|u(z)|^{p}$ implies $0$.
\begin{Rmk}
	It is clear that the first eigenvalue $\lambda_{p,0}$ is nonnegative. Moreover, we note here that the first eigenvalue $\lambda_{p,0}$ satisfies the following statements:
	\begin{itemize}
		\item[(i)] If $\sigma\equiv 0$, then $\lambda_{p,0}=0$.
		\item[(ii)] If $\sigma\not\equiv 0$, then $\lambda_{p,0}>0$.
	\end{itemize}
\end{Rmk}


We now discuss the local existence of a solution to the equation \eqref{equation} which is
\begin{equation*}
\begin{cases}
u_{t}\left(x,t\right)=\Delta_{p,\omega}u\left(x,t\right)+f\left(u\left(x,t\right)\right), & \left(x,t\right)\in S\times\left(0,+\infty\right),\\
B[u]=0, & \text{on} \hspace{2mm}\partial S \times [0,+\infty),\\
u\left(x,0\right)=u_{0}\left(x\right)\geq0, & x\in S,
\end{cases}
\end{equation*}
where $p>1$ and $f$ is locally Lipschitz continuous on $\mathbb{R}$. Here, $B[u]$ on $\partial S\times [0,+\infty)$ stands for the boundary condition \eqref{boundary condition} which is
\begin{equation*}
\mu(z)\frac{\partial u }{\partial_{p} n}(z,t)+\sigma(z)|u(z,t)|^{p-2}u(z,t),\,\,(z,t)\in \partial S\times [0,+\infty),
\end{equation*}
where $\mu,\sigma:\partial S\rightarrow[0,+\infty)$ are functions with $\mu(z)+\sigma(z)>0$ for all $z\in \partial S$. 
\begin{Rmk}\label{compatibility}
	Consider a function $\psi:\mathbb{R}\rightarrow \mathbb{R}$ by
	\begin{equation*}
	\psi(\gamma):=\sum_{x\in S}|\gamma-u(x,t)|^{p-2}\left[\gamma-u(x,t)\right]a(x)+b|\gamma|^{p-2}\gamma,
	\end{equation*}
	where $a(x)\geq 0$ for all $x\in S$, $b\geq 0$ with $a(x)+b>0$ for some $x\in S$. Then it is easy to see that $\psi$ is a continuous function which is strictly increasing and bijective on $\mathbb{R}$. Therefore, there exists $\rho\in\mathbb{R}$ uniquely such that $\psi(\rho)=0$. It means that for all $z\in \partial S$, we can define the value of $u(z,0)$ uniquely according to the boundary condition $B[u]=0$ and initial data $u_{0}$ which are given. i.e. for every $z\in \partial S$, $u(z,0)$ is determined such that
	$$
	\mu(z)\frac{\partial u}{\partial_{p} n}(z,0)+\sigma(z)|u(z,0)|^{p-2}u(z,0)=0,\,\,z\in \partial S,
	$$
	where $\mu,\sigma:\partial S\rightarrow[0,+\infty)$ are given functions with $\mu(z)+\sigma(z)>0$ for all $z\in \partial S$.
\end{Rmk}
\begin{Rmk}
	Considering the initial data with the boundary condition $B[u]=0$ on $\partial S \times [0,+\infty)$, we have compatible condition
	\begin{equation*}
		\mu(z)\frac{\partial u_{0} }{\partial_{p} n}(z)+\sigma(z)|u_{0}(z)|^{p-2}u_{0}(z),\,\,z\in \partial S.
	\end{equation*}
\end{Rmk}
We will use the Schauder fixed point theorem to prove local existence of the equation \eqref{equation}. For this reason, we need the modified version of the Arzel\'a-Ascoli theorem as follows.

\begin{Lemma}[Modified version of the Arzel\'a-Ascoli theorem]\label{Arzela}	
	Let K be a compact subset of $\mathbb{R}$ and $\overline{S}$ be a network. Consider a Banach space $ C\left( \overline{S} \times K \right)$ with the maximum norm $\lVert u \rVert_{\overline{S},K} := \max_{x\in \overline{S}} \max_{t\in K} \left\vert u\left(x,t\right)\right\vert $. Then a subset $A$ of $C\left(\overline{S} \times K\right)$ is relatively compact if A is uniformly bounded on $\overline{S} \times K$ and $A$ is equicontinuous on $K$ for each $x\in \overline{S}$.
\end{Lemma}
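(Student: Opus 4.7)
The plan is to exploit that $\overline{S}$ is a finite vertex set, which reduces the statement to applying the classical Arzel\`a--Ascoli theorem finitely many times and then assembling the pieces. Write $\overline{S}=\{x_{1},\ldots,x_{N}\}$ and, for each $i$, consider the slice $A_{i}:=\{u(x_{i},\cdot)\,:\,u\in A\}\subset C(K)$. The hypothesis that $A$ is uniformly bounded on $\overline{S}\times K$ yields immediately that each $A_{i}$ is uniformly bounded in $C(K)$, and the hypothesis that $A$ is equicontinuous on $K$ for each fixed $x\in \overline{S}$ is by definition the statement that each $A_{i}$ is equicontinuous. Hence the classical Arzel\`a--Ascoli theorem applies to each $A_{i}$ individually.

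Next, given any sequence $\{u_{n}\}_{n\geq 1}\subset A$, I would perform an $N$-step finite extraction: first extract a subsequence along which $u_{n}(x_{1},\cdot)$ converges uniformly on $K$ to some $v_{1}\in C(K)$; from it extract a further subsequence for which $u_{n}(x_{2},\cdot)\to v_{2}$ uniformly; continue in this way through $x_{N}$ to obtain a single subsequence $\{u_{n_{k}}\}$ such that $u_{n_{k}}(x_{i},\cdot)\to v_{i}$ uniformly on $K$ for every $i=1,\ldots,N$. Because only finitely many extractions are required, no diagonal argument is necessary.

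Define the candidate limit $u:\overline{S}\times K\to \mathbb{R}$ by $u(x_{i},t):=v_{i}(t)$. Continuity of each $v_{i}$ on $K$ together with the fact that $\overline{S}$ carries only finitely many vertices makes $u$ an element of $C(\overline{S}\times K)$, and the maximum-norm convergence
$$
\|u_{n_{k}}-u\|_{\overline{S},K}=\max_{1\leq i\leq N}\max_{t\in K}|u_{n_{k}}(x_{i},t)-v_{i}(t)|\longrightarrow 0
$$
follows since the maximum over finitely many null sequences is itself null. Thus every sequence in $A$ has a subsequence converging in $C(\overline{S}\times K)$, which is precisely relative compactness.

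There is no substantial obstacle here; the only point worth emphasizing is \emph{why} the classical hypothesis of equicontinuity in the spatial variable can be dropped, namely that $\overline{S}$ is finite and so its intrinsic topology is discrete, making every function on $\overline{S}$ automatically continuous in $x$. All the work is therefore carried by the temporal variable, where equicontinuity and uniform boundedness are assumed and the classical theorem applies verbatim.
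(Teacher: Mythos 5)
Your proof is correct, but it takes a genuinely different route from the paper's. The paper re-runs the classical total-boundedness argument from scratch: given $\epsilon>0$, it uses equicontinuity to cover $K$ by finitely many neighborhoods $N_{1}(t_{i},\delta_{i})$, uses uniform boundedness to produce a finite $\tfrac{\epsilon}{4}$-net for the set of values $\{f(x,t_{i})\}$, and then partitions $A$ into finitely many subsets $A_{k}$ each of diameter less than $\epsilon$, concluding that $A$ is totally bounded and hence relatively compact in the complete space $C(\overline{S}\times K)$. You instead treat the classical Arzel\`a--Ascoli theorem as a black box: since $\overline{S}=\{x_{1},\dots,x_{N}\}$ is finite, $C(\overline{S}\times K)$ is essentially the finite product $\prod_{i=1}^{N}C(K)$ in the maximum norm, each slice $A_{i}=\{u(x_{i},\cdot):u\in A\}$ is relatively compact in $C(K)$ by the classical theorem, and an $N$-fold subsequence extraction yields a subsequence converging simultaneously at every vertex, hence in the maximum norm. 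Both arguments are complete and correct. The paper's version is self-contained (it never invokes the one-variable theorem) and generalizes the covering argument directly; yours is shorter, cleaner, and makes transparent exactly why equicontinuity in the spatial variable can be dropped --- the discreteness of $\overline{S}$ reduces everything to finitely many applications of the temporal statement. Either would serve the paper's purpose, which is only to feed the Schauder fixed point argument in the local existence theorem.
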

\begin{proof}
	The proof of this version is similar to the original one (see \cite{B}). Thus we only state the idea of the proof. Let $\epsilon>0$ be arbitrarily given. Since $K$ is compact on $\mathbb{R}$ and A is equicontinuous on $K$, there is a finite open cover $\left\{N_{1}\left(t_i, \delta_i\right)\right\}$ of $K$ such that
	\begin{center}
		$\left\vert f\left(x,t\right)-f\left(x,t_i\right) \right\vert <\frac{\epsilon}{4} \hspace{1mm} \text{for all} \hspace{1mm} t\in N_{1}\left(t_i,\delta_i\right),\hspace{1mm} i=1,\dots,n ,\hspace{1mm} x \in \overline{S},\hspace{1mm} f \in A$.
	\end{center}
	Define $E=\left\{f\left(x,t_i\right)\vert\, x \in \overline{S} ,\, i=1,\dots,n,\, f \in A\right\} $. Then $E$ is totally bounded, since A is uniformly bounded. Hence there is a sequence $\left\{\xi_{j} \right\}^{m}_{j=1}$ in $\mathbb{R}$ such that
	\begin{center}
		$E \subset \bigcup^{m}_{j=1} N_{2}\left(\xi_{j},\frac{\epsilon}{4}\right)$.
	\end{center}
	Now, set $F:=\left\{k\,:\,\overline{S}\times\{1,\dots,n\}\rightarrow\{1,\dots,m\} \,\vert\, \text{k is a function}\right\}$ and define
	\begin{center}
		$A_k:=\{f\in A\,\vert\, f\left(x,t_i\right) \in N_{2}\left(\xi_{k_{\left(x,i\right)}},\frac{\epsilon}{4}\right),\,\, x\in\overline{S},\,\, i=1,\dots,n\}$ for each $k\in F$.
	\end{center}
	Then we have to show $A \subset \bigcup_{k\in K} A_{k}$. Let $f\in A$ be fixed. For each $x\in\overline{S}$, $i=1,\cdots,n$, $f\left(x,t_i\right)\in E\subset \bigcup_{j=1}^{m}N_{2}\left(\xi_{j},\frac{\epsilon}{4} \right)$. i.e. there is $j={k_{\left(x,i\right)}}$ such that $f\left(x,t_i \right)\in N_{2}\left(\xi_{j},\frac{\epsilon}{4}\right)$. Thus, $A \subset \bigcup_{k\in F} A_{k}$. 
	
	We now claim that the diameter of each $A_k$ is less than $\epsilon$. For each $f,g\in A_{k}$ and $(x,t)\in\overline{S}\times F$, there exists $1\leq i\leq n$ such that $t\in N_{1}\left(t_i,\delta_i\right)$ and
	\[
	\begin{aligned}
	\left\vert f\left(x,t\right)-g\left(x,t\right)\right\vert  \leq & \left\vert f\left(x,t\right)-f\left(x,t_i\right)\right\vert + \left\vert f\left(x,t_i\right)-\xi_{k_{\left(x,i\right)}} \right\vert \\ &  + \left\vert \xi_{k_{\left(x,i\right)}} - g\left(x,t_i\right) \right\vert + \left\vert g\left(x,t_i\right)-g\left(x,t\right)\right\vert < 4 \cdot \frac{\epsilon}{4}=\epsilon.
	\end{aligned}
	\]
	Hence, $A$ is totally bounded and the proof is complete.
\end{proof}
\begin{Thm}[Local existence]
	There exists $t_{0}>0$ such that the equation \eqref{equation} admits at least one bounded solution $u$ such that $u(x,\cdot)$ is continuous on $[0,t_{0}]$ and differentiable in $(0,t_{0})$, for each $x\in \overline{S}$.
\end{Thm}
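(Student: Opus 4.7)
The plan is to apply Schauder's fixed point theorem to an integral reformulation of \eqref{equation}, using Lemma \ref{Arzela} to supply compactness. Since boundary values are determined implicitly from interior values through Remark \ref{compatibility}, I would first parametrize candidate solutions by their restriction to $S$. Set $M:=2\max_{x\in S}|u_{0}(x)|+1$ and fix a small $t_{0}>0$ to be chosen later. Consider the Banach space $\widetilde{X}:=C(S\times[0,t_{0}])$ with the maximum norm, together with the closed, bounded, convex set
\[
\widetilde{K}:=\bigl\{w\in\widetilde{X}:\|w\|\le M,\ w(x,0)=u_{0}(x)\text{ for all }x\in S\bigr\}.
\]
For each $w\in\widetilde{K}$ and each $t\in[0,t_{0}]$, Remark \ref{compatibility} yields a unique extension $\overline{w}(z,t)$, $z\in\partial S$, for which $B[\overline{w}(\cdot,t)]=0$. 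The strict monotonicity of the function $\psi$ in that remark gives, by an implicit-function argument, that the map $w\mapsto\overline{w}$ is continuous from $\widetilde{X}$ into $C(\overline{S}\times[0,t_{0}])$ and sends bounded sets to bounded sets (in fact, a direct sign argument on $\psi$ at $\pm M$ shows $\|\overline{w}\|\le M$).

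Next, define the operator $T:\widetilde{K}\to\widetilde{X}$ by
\[
(Tw)(x,t):=u_{0}(x)+\int_{0}^{t}\bigl[\Delta_{p,\omega}\overline{w}(x,s)+f(\overline{w}(x,s))\bigr]\,ds,\qquad x\in S,\ t\in[0,t_{0}].
\]
Any fixed point $w=Tw$ produces, through its extension, a function $u:=\overline{w}$ on $\overline{S}\times[0,t_{0}]$ that satisfies the boundary condition by construction, agrees with $u_{0}$ at $t=0$, and, by the fundamental theorem of calculus applied to the continuous integrand, is continuous in $t$ on $[0,t_{0}]$ and differentiable on $(0,t_{0})$ with $u_{t}=\Delta_{p,\omega}u+f(u)$ at every interior vertex. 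To verify the Schauder hypotheses I would proceed in three steps: (i) using the bound on $\overline{w}$ on $\widetilde{K}$ and local Lipschitzness of $f$, obtain a constant $C_{M}$ with $|\Delta_{p,\omega}\overline{w}(x,s)+f(\overline{w}(x,s))|\le C_{M}$ and shrink $t_{0}$ so that $C_{M}t_{0}\le M/2$, giving $T\widetilde{K}\subseteq\widetilde{K}$; (ii) continuity of $T$ on $\widetilde{K}$ follows from continuity of $w\mapsto\overline{w}$, continuity of $f$ and of $r\mapsto|r|^{p-2}r$, and a dominated bound inside the integral; (iii) the same constant gives $|Tw(x,t)-Tw(x,s)|\le C_{M}|t-s|$, so $T(\widetilde{K})$ is equicontinuous in $t$ uniformly in $x\in S$ and $w\in\widetilde{K}$, and Lemma \ref{Arzela} yields relative compactness.

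The principal technical obstacle is the boundary-extension step: one must show that the pointwise roots $\overline{w}(z,t)$ of the equation in Remark \ref{compatibility} depend continuously on $w$, uniformly in $t$. This reduces to a root-continuity statement for the strictly monotone function $\psi$ whose coefficients depend continuously on $\{w(x,t):x\sim z\}$; standard perturbation arguments apply because $\psi$ is strictly increasing across its unique zero and is coercive at infinity. Some care is needed when $1<p<2$, since $r\mapsto|r|^{p-2}r$ is merely H\"older continuous at the origin, but the coercivity at infinity still provides the uniform control. Once this continuity is secured, the Schauder verifications are routine, and the fixed point yields the desired local solution; differentiability in $t$ at boundary vertices is then recovered by applying the implicit function theorem to the relation $B[\overline{w}(\cdot,t)]=0$ using the now-differentiable interior values.
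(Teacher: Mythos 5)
Your proposal follows essentially the same route as the paper: the same integral operator $D[u](x,t)=u_{0}(x)+\int_{0}^{t}\bigl[\Delta_{p,\omega}u+f(u)\bigr]ds$ on a closed bounded convex subset of $C(S\times[0,t_{0}])$, boundary values recovered from interior values via the monotone function $\psi$ of Remark \ref{compatibility}, and compactness from Lemma \ref{Arzela} feeding into Schauder's fixed point theorem. The one place you go beyond the paper is in explicitly isolating and justifying the continuity of the interior-to-boundary extension map $w\mapsto\overline{w}$, a step the paper uses implicitly in its continuity estimate for $D$ without comment; this is a welcome clarification rather than a divergence in method.
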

\begin{proof}
	We first start with the following Banach space:
	\begin{equation*}
	C(S\times [0,t_{0}]):=\left\{ u:S\times [0,t_{0}]\rightarrow \mathbb{R} \,\vert \,u(x,\cdot)\in C([0,t_{0}])\,\,\text{for each}\,\,x\in S \right\}
	\end{equation*}
	with the maximum norm $\lVert u\rVert_{S,t_{0}}:=\max_{x\in S}\max_{0\leq t\leq t_{0}}|u(x,t)|$, where $t_{0}\in \mathbb{R}$ is a positive constant which will be defined later. Now, consider a subspace
	\begin{equation*}
		B_{t_{0}}:=\left\{ u\in C(S\times [0,t_{0}]) \,|\,\lVert u\rVert_{S,t_{0}}\leq 2\lVert u_{0}\rVert_{S,t_{0}} \right\}
	\end{equation*}
	of a Banach space $C(S\times [0,t_{0}])$. Then it is clear that $B_{t_{0}}$ is convex. In order to apply the Schauder fixed point theorem, we have to show that $B_{t_{0}}$ is closed. Let $g_{n}$ be a sequence in $B_{t_{0}}$ which converges to $g$. Since the convergence is uniform, $g$ is continuous. Moreover, $\left|\lVert g_{n} \rVert_{S,t_{0}}- \lVert g \rVert_{S,t_{0}} \right| \leq \lVert g_{n}-g\rVert_{S,t_{0}}$ implies that $g\in B_{t_{0}}$. Hence, $B_{t_{0}}$ is closed.
	
	On the other hand, for every $u\in B_{t_{0}}$, we can define the value of $u(z,t)$ uniquely according to the boundary condition $B[u]=0$ by the similar way to Remark \ref{compatibility}. i.e. for every $u\in B_{t_{0}}$, $u(z,t)$ satisfies
	$$
	\mu(z)\frac{\partial u}{\partial_{p} n}(z,t)+\sigma(z)|u(z,t)|^{p-2}u(z,t)=0,\,\,(z,t)\in \partial S\times[0,t_{0}]
	$$
	for all $(z,t)\in \partial S \times [0,t_{0}]$, where $\mu,\sigma:\partial S\rightarrow[0,+\infty)$ are given functions with $\mu(z)+\sigma(z)>0$ for all $z\in \partial S$.
	Then by the boundary condition, it is clear that $u(z,t)$ satisfies $|u(z,t)|\leq \lVert u\rVert_{S,t_{0}}$, $(z,t)\in \partial S\times [0,t_{0}]$.
	
	Let us define an operator $D:B_{t_{0}} \rightarrow B_{t_{0}}$ by
	\begin{equation*}
	D[u](x,t):=u_{0}(x)+\int_{0}^{t}\Delta_{p,\omega}u(x,s)+f(u(x,s))\,ds,\,\,(x,t)\in S\times [0,t_{0}],
	\end{equation*}
	where $u_{0}:\overline{S}\rightarrow \mathbb{R}$ is a given function.

	Since $f$ is locally Lipschitz continuous on $\mathbb{R}$, there exists $L>0$ such that
	$$|f(a)-f(b)|\leq L|a-b|,\,\,a,b\in[-m,m],$$
	where $m=2\lVert u_{0}\rVert_{S,t_{0}}$. Now, put
	$$t_{0}:=\frac{\lVert u_{0}\rVert_{S,t_{0}}}{\omega_{0}(4\lVert u_{0}\rVert_{S,t_{0}})^{p-1}+4L\lVert u_{0}\rVert_{S,t_{0}}},$$
	where $\omega_{0}:=\max_{x\in\overline{S}}\sum_{y\in\overline{S}}\omega(x,y)$. Then, it is easy to see that the operator $D$ is well-defined. Now, we will show that $D$ is continuous. The verification of the continuity is divided into 2 cases as follows:
	\newline$(i)\,\,1<p<2$
	\newline For $u$ and $v$ in $B_{t_{0}}$, it follows that
	\begin{equation*}
	\begin{aligned}
		\left| D[u](x,t)-D[v](x,t) \right|& \leq\left|\int_{0}^{t_{0}}\sum_{y\in\overline{S}}2^{2-p}\lVert u-v\rVert_{S,t_{0}}^{p-1}\omega(x,y)+L\lVert u-v\rVert_{S,t_{0}} \,ds\right| \\&\leq t_{0}\left[ 2^{2-p}\omega_{0}\lVert u-v\rVert_{S,t_{0}}^{p-1}+L \lVert u-v \rVert_{S,t_{0}} \right].
	\end{aligned}
	\end{equation*}
	\newline$(ii)\,\,p\geq 2$
	\newline For $u$ and $v$ in $B_{t_{0}}$, we have
	\begin{equation*}
	\begin{aligned}
	&\left| D[u](x,t)-D[v](x,t) \right|\\& \leq\left|\int_{0}^{t_{0}}\sum_{y\in\overline{S}}2^{2p-3}(p-1)\lVert u_{0}\rVert_{S,t_{0}}^{p-2}\lVert u-v\rVert_{S,t_{0}}\omega(x,y)+L\lVert u-v\rVert_{S,t_{0}} \,ds\right| \\&\leq t_{0}\left[ 2^{2p-3}(p-1)\lVert u_{0}\rVert_{S,t_{0}}^{p-2}\omega_{0}\lVert u-v\rVert_{S,t_{0}}+L \lVert u-v \rVert_{S,t_{0}} \right].
	\end{aligned}
	\end{equation*}
	Consequently, for each $p>1$, we obtain
	\begin{equation*}
		\lVert D[u]-D[v]\rVert_{S,t_{0}} \leq C_{1}\lVert u-v\rVert_{S,t_{0}}^{p-1}+C_{2}\lVert u-v\rVert_{S,t_{0}}
	\end{equation*}
	where $C_{1}$ and $C_{2}$ are constant depending only on $u_{0}$, $t_{0}$, $p$, $L$ and $\omega_{0}$. Therefore, we obtain the continuity of $D$.
	
	Finally, we will show that $D(B(t_{0}))$ is relatively compact. By Lemma \ref{Arzela}, it is enough to show that $D(B(t_{0}))$ is uniformly bounded on $S\times [0,t_{0}]$ and equicontinuous on $[0,t_{0}]$. Since $D(B(t_{0}))\in B(t_{0})$, it is trivial that $D(B(t_{0}))$ is uniformly bounded. On the other hand, it follows that for each $x\in S$,
	\begin{equation*}
		|D[u](x,t_{1})-D[u](x,t_{2})|\leq |t_{1}-t_{2}|\left[\omega_{0}(4\lVert u_{0}\rVert_{S,t_{0}})^{p-1}+4L\lVert u_{0}\rVert_{S,t_{0}}\right]
	\end{equation*}
	for all $t_{1},t_{2}\in[0,t_{0}]$ and $u\in B_{t_{0}}$, which implies that $D(B(t_{0}))$ is equicontinuous on $[0,t_{0}]$. Hence, $D(B(t_{0}))$ is relatively compact by Lemma \ref{Arzela}. Therefore, there exists $u\in B(t_{0})$ satisfying $D[u]=u$ and boundary condition $B[u]=0$, by the Schauder fixed point theorem. It is clear that $u$ is the solution to the equation \eqref{equation}. On the other hand, it is easy to see that $u$ is bounded. Moreover, $u(x,\cdot)$ is continuous on $ [0,t_{0}]$ and differentiable in $(0,t_{0})$, for each $x\in \overline{S}$, by the definition of $D$ and the boundary condition $B[u]=0$.
\end{proof}
Now, we state two types of comparison principles.
\begin{Thm}[Comparison Principle]\label{comparison principle}
Let $T>0$ ($T$ may be $+\infty$), $p>1$, and $f$ be locally Lipschitz continuous on $\mathbb{R}$. Suppose that real-valued functions $u(x,\cdot)$, $v(x,\cdot) \in C[0, T]$ are differentiable in $(0, T)$ for each $x \in \overline{S}$ and satisfy
\begin{equation}\label{eq1-2}
\begin{cases}
 u_{t}(x,t)-\Delta_{p,\omega}u(x,t)-f(u(x,t)) & \\
\geq v_{t}(x,t)-\Delta_{p,\omega}v(x,t)-f(v(x,t)), & (x,t)\in S\times\left(0,T\right),\\
\hspace{1mm}
 B[u]\geq B[v], & \text{on}\,\,\partial S\times[0,T),\\
 \hspace{1mm}
 u\left(x,0\right)\geq v\left(x,0\right), & x\in\overline{S}.
\end{cases}
\end{equation}
Then $u\left(x,t\right)\geq v\left(x,t\right)$ for all $(x,t)\in\overline{S}\times[0,T).$
\end{Thm}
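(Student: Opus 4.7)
My plan is to prove $u\geq v$ on $\overline{S}\times[0,T)$ by contradiction, using a maximum-principle argument weighted by an exponential in $t$. Fix any $T^\ast<T$. Since $u,v$ are continuous on the finite set $\overline{S}\times[0,T^\ast]$, both are bounded there, so local Lipschitz continuity of $f$ provides a constant $L>0$ with $|f(u)-f(v)|\leq L|u-v|$ on the joint range. Choose $\lambda>L$ and set
\[
M:=\max_{(x,t)\in\overline{S}\times[0,T^\ast]}e^{-\lambda t}\bigl(v(x,t)-u(x,t)\bigr).
\]
This maximum is attained at some $(x_0,t_0)\in\overline{S}\times[0,T^\ast]$ since $\overline{S}$ is finite and $e^{-\lambda t}(v-u)(x,\cdot)$ is continuous on $[0,T^\ast]$. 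The initial condition $u(\cdot,0)\geq v(\cdot,0)$ forces $t_0>0$ whenever $M>0$, and the task is to show that $M>0$ produces a contradiction in each of the cases $x_0\in S$ and $x_0\in\partial S$.

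If $x_0\in S$, differentiability in time at $t_0$ together with the one-sided extremum gives $v_t(x_0,t_0)-u_t(x_0,t_0)\geq\lambda\bigl(v(x_0,t_0)-u(x_0,t_0)\bigr)$. Spatial maximality gives $v(y,t_0)-v(x_0,t_0)\leq u(y,t_0)-u(x_0,t_0)$ for every $y\in\overline{S}$, so the monotonicity of $s\mapsto|s|^{p-2}s$ and the definition of $\Delta_{p,\omega}$ yield $\Delta_{p,\omega}v(x_0,t_0)\leq\Delta_{p,\omega}u(x_0,t_0)$. Inserting these into the differential inequality together with $|f(v)-f(u)|\leq L|v-u|$ collapses everything to $(\lambda-L)\bigl(v(x_0,t_0)-u(x_0,t_0)\bigr)\leq 0$, which contradicts $\lambda>L$ and $v(x_0,t_0)>u(x_0,t_0)$. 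If $x_0\in\partial S$, the same spatial-maximality estimate applied to each $x\in S$ shows
\[
\mu(x_0)\Bigl[\tfrac{\partial u}{\partial_p n}(x_0,t_0)-\tfrac{\partial v}{\partial_p n}(x_0,t_0)\Bigr]\leq 0,
\]
so the hypothesis $B[u]\geq B[v]$ forces $\sigma(x_0)\bigl[|u(x_0,t_0)|^{p-2}u(x_0,t_0)-|v(x_0,t_0)|^{p-2}v(x_0,t_0)\bigr]\geq 0$. When $\sigma(x_0)>0$, strict monotonicity of $s\mapsto|s|^{p-2}s$ gives $u(x_0,t_0)\geq v(x_0,t_0)$, again contradicting $M>0$.

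The remaining sub-case $\sigma(x_0)=0$ (forcing $\mu(x_0)>0$) is the main obstacle: the boundary inequality then degenerates and by itself does not rule out $v(x_0,t_0)>u(x_0,t_0)$. To close this gap I would exploit that every term in the $p$-normal derivative difference is nonpositive but must sum to zero, so strict monotonicity of $s\mapsto|s|^{p-2}s$ forces $v(y,t_0)-u(y,t_0)=v(x_0,t_0)-u(x_0,t_0)$ for every $y\in S$ with $\omega(x_0,y)>0$; the very definition of $\partial S$ guarantees at least one such $y$, and this $y$ is itself a maximizer of $e^{-\lambda t}(v-u)$ lying in $S$, which reduces the sub-case to the interior argument above. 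Since $T^\ast<T$ was arbitrary, the conclusion $u\geq v$ extends to all of $\overline{S}\times[0,T)$.
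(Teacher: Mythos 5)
Your proposal is correct and follows essentially the same route as the paper's proof: an exponential-in-time weighting of the difference $u-v$, an extremum-point argument that combines the one-sided time derivative, the monotonicity of $s\mapsto|s|^{p-2}s$ in the discrete $p$-Laplacian, and the Lipschitz bound on $f$, with the same case split at the boundary and the degenerate sub-case $\sigma(x_0)=0$ resolved by propagating the extremum to an adjacent interior vertex. If anything, your treatment of that sub-case is slightly more careful than the paper's, which asserts the equality $\tilde{u}(x_0,t_0)-\tilde{v}(x_0,t_0)=\tilde{u}(x,t_0)-\tilde{v}(x,t_0)$ for all $x\in S$ when it really only follows for the neighbors of $x_0$ (which suffices, exactly as you argue).
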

\begin{proof}
	Let $T'>0$ be arbitrarily given with $T'<T$. Since $f$ is locally Lipschitz continuous on $\mathbb{R}$, there exists $L>0$ such that
	\begin{equation}\label{eq1-3}
	\left|f\left(a\right)-f\left(b\right)\right|\leq L\left|a-b\right|,\,\,a,b\in[-m,m]
	\end{equation}
	where $m={\displaystyle \max_{x\in\overline{S}}\max_{0\leq t\leq T'}}\left\{\left|u\left(x,t\right)\right|,\left|v\left(x,t\right)\right|\right\}.$
	Let $\tilde{u}, \tilde{v}\,:\,\overline{S}\times\left[0,T'\right]\rightarrow\mathbb{R}$ be the functions defined by
	\[
	\tilde{u}(x,t):=e^{-2Lt}u\left(x,t\right),\,\,
	(x,t)\in\overline{S}\times[0,T'].
	\]
	\[
	\tilde{v}(x,t):=e^{-2Lt}v\left(x,t\right),\,\,
	(x,t)\in\overline{S}\times[0,T'].
	\]
	Then from \eqref{eq1-2}, we have
	\begin{equation}\label{eq1-4}
	\begin{aligned}
	&\left[\tilde{u}_{t}\left(x,t\right)-\tilde{v}_{t}\left(x,t\right)\right]-e^{2L(p-2)t}\left[\Delta_{p,\omega}\tilde{u}\left(x,t\right)-\Delta_{p,\omega}\tilde{v}\left(x,t\right)\right]\\ &+2L\left[\tilde{u}\left(x,t\right)-\tilde{v}\left(x,t\right)\right]-e^{-2Lt}\left[f\left(u\left(x,t\right)\right)-f\left(v\left(x,t\right)\right)\right]\geq0
	\end{aligned}
	\end{equation}
	for all $\left(x,t\right)\in S\times(0,T']$.

	We recall that $\tilde{u}(x,\cdot)$ and $\tilde{v}(x,\cdot)$ are continuous on $[0,T']$ for each $x\in \overline{S}$ and $\overline{S}$ is finite. Hence, we can find
	$\left(x_{0},t_{0}\right)\in\overline{S}\times\left[0,T'\right]$ such that
	\[
	\left(\tilde{u}-\tilde{v}\right)\left(x_{0},t_{0}\right)={\displaystyle \min_{x\in\overline{S}}\min_{0\leq t\leq T'}\left(\tilde{u}-\tilde{v}\right)\left(x,t\right)},
	\]
	which implies that
	\begin{equation}\label{123}
	\tilde{v}\left(y,t_{0}\right)-\tilde{v}\left(x_{0},t_{0}\right)\leq \tilde{u}\left(y,t_{0}\right)-\tilde{u}\left(x_{0},t_{0}\right),\,\,\,y\in\overline{S}.
	\end{equation}

	Then now we have only to show that $\left(\tilde{u}-\tilde{v}\right)\left(x_{0},t_{0}\right)\geq0$.

	Suppose that $\left(\tilde{u}-\tilde{v}\right)\left(x_{0},t_{0}\right)<0$, on the contrary. Assume  that $x_{0}\in\partial S$. Then we see that
	\begin{equation}\label{boundary comparison principle}
	\begin{aligned}
	0 \leq&\mu\left(x_{0}\right)\sum_{x\in S}\left[|\tilde{u}\left(x_{0},t_{0}\right)-\tilde{u}\left(x,t_{0}\right)|^{p-2}\left(\tilde{u}\left(x_{0},t_{0}\right)-\tilde{u}\left(x,t_{0}\right)\right)\right.\\&-\left.|\tilde{v}\left(x_{0},t_{0}\right)-\tilde{v}\left(x,t_{0}\right)|^{p-2}\left(\tilde{v}\left(x_{0},t_{0}\right)-\tilde{v}\left(x,t_{0}\right)\right)\right]\omega\left(x_{0},x\right)\\
	&+\sigma\left(x_{0}\right)\left(\tilde{u}\left(x_{0},t_{0}\right)-\tilde{v}\left(x_{0},t_{0}\right)\right)
	\end{aligned}
	\end{equation}
	Therefore, if $\sigma(x_{0})>0$ then the equation \eqref{boundary comparison principle} is negative, which leads a contradiction. If $\sigma(x_{0})=0$, then we have
	\[
	\tilde{u}(x_{0},t_{0})-\tilde{v}(x_{0},t_{0})=\tilde{u}(x,t_{0})- \tilde{v}(x,t_{0})
	\]
	for all $x\in S$. Hence, there exists $x_{1}\in S$ such that 
	\[
	\tilde{u}(x_{0},t_{0})-\tilde{v}(x_{0},t_{0})=\tilde{u}(x_{1},t_{0})- \tilde{v}(x_{1},t_{0}).
	\]
	Hence we may choose $x_{0}\in S$. Moreover, since $\tilde{u}(x,0)-\tilde{v}(x,0)\geq0$ on $\overline{S}$, we have $\left(x_{0},t_{0}\right)\in S\times(0,T']$. Then we obtain from \eqref{123} that
	\begin{equation}\label{eq1-5}
	\Delta_{p,\omega}\tilde{u}\left(x_{0},t_{0}\right)-\Delta_{p,\omega}\tilde{v}\left(x_{0},t_{0}\right)\geq 0
	\end{equation}
	and it follows from the differentiability of $\left(\tilde{u}-\tilde{v}\right)\left(x,t\right)$ in $(0,T']$ for each $x\in\overline{S}$ that
	\begin{equation}\label{eq1-6}
	\left(\tilde{u}_{t}-\tilde{v}_{t}\right)\left(x_{0},t_{0}\right)\leq 0.
	\end{equation}

	According to \eqref{eq1-3}, we have
	\begin{equation}\label{eq1-7}
	\begin{aligned}
	&2L\left[\tilde{u}\left(x_{0},t_{0}\right)-\tilde{v}\left(x_{0},t_{0}\right)\right]-e^{-2Lt_{0}}\left[f\left(u\left(x_{0},t_{0}\right)\right)-f\left(v\left(x_{0},t_{0}\right)\right)\right]\\
	&\leq2L\left[\tilde{u}\left(x_{0},t_{0}\right)-\tilde{v}\left(x_{0},t_{0}\right)\right]+Le^{-2Lt_{0}}\left|u\left(x_{0},t_{0}\right)-v\left(x_{0},t_{0}\right)\right|\\
	&=2L\left[\tilde{u}\left(x_{0},t_{0}\right)-\tilde{v}\left(x_{0},t_{0}\right)\right]+L\left|\tilde{u}\left(x_{0},t_{0}\right)-\tilde{v}\left(x_{0},t_{0}\right)\right|\\
	&=L\left[\tilde{u}\left(x_{0},t_{0}\right)-\tilde{v}\left(x_{0},t_{0}\right)\right]<0,
	\end{aligned}
	\end{equation}
	since $\tilde{u}\left(x_{0},t_{0}\right)<\tilde{v}\left(x_{0},t_{0}\right)$. Combining \eqref{eq1-5}, \eqref{eq1-6}, \eqref{eq1-7}, we obtain the following:
	\[
	\begin{aligned}
	&\tilde{u}\left(x_{0},t_{0}\right)-\tilde{v}\left(x_{0},t_{0}\right)-\left[\Delta_{p,\omega}\tilde{u}\left(x_{0},t_{0}\right)-\Delta_{p,\omega}\tilde{v}\left(x_{0},t_{0}\right)\right]\\
	&+2L\left[\tilde{u}\left(x_{0},t_{0}\right)-\tilde{v}\left(x_{0},t_{0}\right)\right]-e^{-2Lt_{0}}\left[f\left(u\left(x_{0},t_{0}\right)\right)-f\left(v\left(x_{0},t_{0}\right)\right)\right]<0,
	\end{aligned}
	\]
	which contradicts \eqref{eq1-4}. Therefore $\tilde{u}\left(x,t\right)\geq\tilde{v}\left(x,t\right)$ for all $(x,t)\in S\times(0,T']$ so that we get $u\left(x,t\right)\geq v\left(x,t\right)$ for all $(x,t)\in\overline{S}\times[0,T)$, since $T'<T$ is arbitrarily given.
\end{proof}

When $p\geq 2$, we obtain a strong comparison principle as follows:
\begin{Thm}[Strong Comparison Principle]\label{SCP}
Let $T>0$, $(T\, may\,be +\infty)$, $p\geq 2$, and $f$ be locally Lipschitz continuous on $\mathbb{R}$. Suppose that real-valued functions $u(x,\cdot)$, $v(x,\cdot) \in C[0, T]$ are differentiable in $(0, T)$ for each $x \in \overline{S}$ and satisfy
\begin{equation}\label{eq1-8}
\begin{cases}
 u_{t}(x,t)-\Delta_{p,\omega}u(x,t)-f(u(x,t)) & \\
 \geq v_{t}(x,t)-\Delta_{p,\omega}v(x,t)-f(v(x,t)), & (x,t)\in S\times\left(0,T\right),\\
 B[u]\geq B[v], & \text{on}\,\,\partial S\times[0,T),\\
 u\left(x,0\right)\geq v\left(x,0\right), & x\in\overline{S}.
\end{cases}
\end{equation}
If $u\left(x^{*},0\right)>v\left(x^{*},0\right)$ for some $x^{*}\in S$, then $u\left(x,t\right)>v\left(x,t\right)$ for all $(x,t)\in S\cup\Gamma\times(0,T).$
\end{Thm}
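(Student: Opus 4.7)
By the (weak) Comparison Principle (Theorem~\ref{comparison principle}), $w := u - v \geq 0$ on $\overline{S} \times [0, T)$; the task is to upgrade this to strict positivity on $(S \cup \Gamma) \times (0, T)$. Fix $T' \in (0, T)$ arbitrary and let $M$ bound $|u|, |v|$ on $\overline{S} \times [0, T']$. The key observation, available precisely because $p \geq 2$ so that $\phi(s) := |s|^{p-2} s$ is $C^1$, is the mean-value identity
\begin{equation*}
\phi(u(y,t) - u(x,t)) - \phi(v(y,t) - v(x,t)) = a(x,y,t)\,(w(y,t) - w(x,t)),
\end{equation*}
where $a(x,y,t) := (p-1) \int_0^1 |\theta(u(y,t) - u(x,t)) + (1-\theta)(v(y,t) - v(x,t))|^{p-2}\,d\theta$ is nonnegative, uniformly bounded on $\overline{S} \times \overline{S} \times [0, T']$ by a constant depending on $M, p$, and equals $0$ if and only if both $u(y,t) = u(x,t)$ and $v(y,t) = v(x,t)$. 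Thus $\Delta_{p,\omega} u - \Delta_{p,\omega} v$ becomes a nonnegatively-weighted discrete Laplacian applied to $w$.

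\emph{Step 1 (Gr\"onwall at $x^*$).} For $x \in S$, inserting the identity into the PDE and using $|f(u) - f(v)| \leq L w$ yields $w_t(x,t) + C_1 w(x,t) \geq \sum_y a(x,y,t)\, w(y,t)\, \omega(x,y) \geq 0$ for some $C_1 = C_1(M, L, p, \omega)$; integrating gives $w(x^*, t) \geq e^{-C_1 t}\, w(x^*, 0) > 0$ for all $t \in [0, T']$.

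\emph{Step 2 (propagation in $S$).} I induct on the graph distance from $x^*$ within the connected subgraph $S$. Assume $w(y, \cdot) > 0$ on $(0, T')$ for some $y \in S$, and let $z \in S$ be adjacent to $y$. If $w(z, t_0) = 0$ at some $t_0 \in (0, T')$, then $w(z, \cdot) \geq 0$ attains its minimum at the interior point $t_0$, so $w_t(z, t_0) = 0$. The PDE at $z$, with the $f$-term vanishing because $u(z, t_0) = v(z, t_0)$, gives
\begin{equation*}
0 = w_t(z, t_0) \geq \sum_{y' \in \overline{S}} a(z, y', t_0)\, w(y', t_0)\, \omega(z, y') \geq 0,
\end{equation*}
forcing every nonnegative summand to vanish. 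The $y' = y$ summand combined with $\omega(z, y) > 0$ and $w(y, t_0) > 0$ forces $a(z, y, t_0) = 0$, whence $u(y, t_0) = u(z, t_0)$ and $v(y, t_0) = v(z, t_0)$, giving $w(y, t_0) = w(z, t_0) = 0$, a contradiction. Thus $w > 0$ on $S \times (0, T')$.

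\emph{Step 3 (boundary in $\Gamma$).} For $z \in \Gamma$ pick $y \in S$ with $y \sim z$ (exists by definition of $\partial S$). If $w(z, t_0) = 0$ for some $t_0 \in (0, T')$, then the inequality $B[u] \geq B[v]$ at $(z, t_0)$ loses its $\sigma$-contribution because $u(z, t_0) = v(z, t_0)$; dividing by $\mu(z) > 0$ and applying the same decomposition reduces it to $\sum_{x \in S} a(z, x, t_0)\, w(x, t_0)\, \omega(x, z) \leq 0$. Every summand is nonnegative, so all vanish, and the $x = y$ term yields the same contradiction as in Step 2. Since $T' < T$ was arbitrary, the conclusion holds on $(S \cup \Gamma) \times (0, T)$. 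The main obstacle is precisely the hypothesis $p \geq 2$: the whole scheme rests on $\phi'(s) = (p-1)|s|^{p-2}$ being locally bounded, which makes the coefficient $a$ well-defined and uniformly bounded, and supplies the clean dichotomy "$a = 0 \Leftrightarrow$ both edge-differences vanish'' that drives the contradictions. For $1 < p < 2$, the singularity of $\phi'$ at $0$ destroys both the Gr\"onwall step and that dichotomy, so a genuinely different argument would be needed in that range.
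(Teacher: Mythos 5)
Your proposal is correct and follows essentially the same route as the paper: weak comparison first, a mean-value/Gr\"onwall estimate at $x^{*}$ (valid because $\phi(s)=|s|^{p-2}s$ is $C^{1}$ for $p\geq2$), propagation of strict positivity through $S$ by connectivity, and a separate treatment of $\Gamma$ via the boundary inequality. The only cosmetic quibble is your ``$a=0$ iff both edge-differences vanish'' dichotomy, which for $p=2$ degenerates to $a\equiv1>0$; there the contradiction in Steps 2--3 is reached even more directly, so nothing is lost.
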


\begin{proof}
	First, note that $u\geq v$ on $\overline{S}\times[0,T)$ by above theorem. Let $T'>0$ be arbitrarily given with $T'<T$. Since $f$ is locally Lipschitz continuous on $\mathbb{R}$, there exists $L>0$ such that
	\begin{equation}\label{eq1-9}
	\left|f\left(a\right)-f\left(b\right)\right|\leq L\left|a-b\right|,\,\,a,b\in[-m,m]
	\end{equation}
	where $m={\displaystyle \max_{x\in\overline{S}}\max_{0\leq t\leq T'}}\left\{\left|u\left(x,t\right)\right|,\left|v\left(x,t\right)\right|\right\}.$
	Let $\tau\,:\,\overline{S}\times\left[0,T'\right]\rightarrow\mathbb{R}$ be the functions defined by
	\[
	\tau\left(x,t\right) := u\left(x,t\right)-v\left(x,t\right),\,\,\left(x,t\right)\in\overline{S}\times\left[0,T'\right].
	\]

	Then $\tau\left(x,t\right)\geq0$ for all $\left(x,t\right)\in\overline{S}\times\left[0,T'\right]$. From the inequality \eqref{eq1-8}, we have
	\begin{equation}\label{equation0}
	\tau_{t}(x^{*},t)-\Delta_{p,\omega}u\left(x^{*},t\right)-\Delta_{p,\omega}v(x^{*},t)-\left[f\left(u(x^{*},t)\right)-f\left(v(x^{*},t)\right)\right]\geq0.
	\end{equation}
	for all $0<t\leq T'$. Then by the mean value theorem, for each $y\in \overline{S}$ and $0\leq t\leq T'$, it follows that
	\begin{equation}\label{111}
	\begin{aligned}
			&|u(y,t)-u(x^{*},t)|^{p-2}[u(y,t)-u(x^{*},t)]-|v(y,t)-v(x^{*},t)|^{p-2}[v(y,t)-v(x^{*},t)]\\&=(p-1)|\zeta(x^{*},y,t)|^{p-2}[\tau(y,t)-\tau(x,t)],
	\end{aligned}
	\end{equation}
	where $|\zeta(x^{*},y,t)|\leq2\max_{x\in\overline{S}}\max_{0\leq t\leq T'}{|u(x,t)|,|v(x,t)|}$.	Using \eqref{eq1-9} and \eqref{111}, the inequality \eqref{equation0} becomes
	\begin{equation*}
	\begin{aligned}
	& \tau_{t}\left(x^{*},t\right)\\
	& \geq-d_{\omega}x^{*}(p-1)[2M]^{p-2}\tau\left(x^{*},t\right)-L|\tau\left(x^{*},t\right)|\\
	&=-\left(d_{\omega}x^{*}(p-1)[2M]^{p-2}+L\right)\tau\left(x^{*},t\right).
	\end{aligned}
	\end{equation*}
	This implies
	\begin{equation}\label{equation1}
	\tau\left(x^{*},t\right)\geq\tau\left(x^{*},0\right)e^{-\left(d_{\omega}x^{*}(p-1)[2M]^{p-2}+L\right)t}>0,\,\,t\in(0,T'],
	\end{equation}
	since $\tau\left(x^{*},0\right)>0$. Now, suppose there exists $(x_{0},t_{0})\in S\times (0,T']$ such that
	\begin{center}
		$\tau\left(x_{0},t_{0}\right)=\displaystyle \min_{x\in S\cup\Gamma,\,0<t\leq T'} \tau\left(x,t\right)=0$.
	\end{center}
	Case 1: $x_{0}\in S$.\\
	Since $\tau(x_{0},t_{0})\leq \tau(x,t)$ for all $(x,t)\in \overline{S}\times [0,T']$, We have
	\[
	\tau_{t}\left(x_{0},t_{0}\right)\leq0
	\]
	and
	\[
	\Delta_{p,\omega}u\left(x_{0},t_{0}\right)-\Delta_{p,\omega}v\left(x_{0},t_{0}\right)\geq0.
	\]
	Hence, from the inequality \eqref{eq1-8}, we obtain
	\begin{center}
		$0\leq\tau_{t}\left(x_{0},t_{0}\right)-\Delta_{p,\omega}u\left(x_{0},t_{0}\right)+\Delta_{p,\omega}v\left(x_{0},t_{0}\right)\leq0$.
	\end{center}
	Therefore, we have
	\[
	\Delta_{p,\omega}u\left(x_{0},t_{0}\right)-\Delta_{p,\omega}v\left(x_{0},t_{0}\right)=0,
	\]
	which implies that $\tau\left(y,t_{0}\right)=0$ for all $y\in \overline{S}$ with $y\sim x_{0}$.
	Now, for any $x \in \overline S,$ there exists a path
	\begin{displaymath}
	x_{0} \sim x_{1} \sim \cdots \sim x_{n} \sim x,
	\end{displaymath}
	since $\overline S$ is connected. By applying the same argument as above inductively we see that $\tau(x, t_{0})=0$ for every $x \in \overline S$, which is a contradiction to \eqref{equation1}.\\
	Case 2: $x_{0}\in \Gamma$.\\
	By the boundary condition in \eqref{eq1-8}, we have
	\begin{equation*}
	\begin{aligned}
	&\mu(x)\left[\frac{\partial u}{\partial_{p} n}(x_{0},t_{0})-\frac{\partial u}{\partial_{p} n}(x_{0},t_{0})\right]\\&\geq\sigma(x)[\vert u(x_{0},t_{0})\vert^{p-2}u(x_{0},t_{0})-\vert u(x,t)\vert^{p-2}u(x_{0},t_{0})]=0,
	\end{aligned}
	\end{equation*}
	which follows that
	\begin{equation*}
	\sum_{x\in S}\left[ -|u(x,t_{0})|^{p-2}u(x,t_{0})+|v(x,t_{0})|^{p-2}v(x,t_{0}) \right]\omega(x,x_{0})\geq 0.
	\end{equation*}
	It means that there exists $x_{1}\in S$ with $x_{0}\sim x_{1}$ such that $\tau(x_{1},t_{0})=0$, which contradicts to Case 1. Hence, we finally obtain that $u\left(x,t\right)>v\left(x,t\right)$ for all $\left(x,t\right)\in S\times\left(0,T\right)$, since $T'<T$ is arbitrarily given.
\end{proof}

We note that by the comparison principle, if $f(0)=0$ then solutions $u$ to the equation \eqref{equation} are nonnegative. On the other hand, it is natural that $f$ is assumed to be positive on $(0,+\infty)$ when we deal with the blow-up theory. Hence, we always assume that $f$ is a locally Lipschitz continuous function on $\mathbb{R}$ which is positive in $(0,+\infty)$ and, $f(0)=0$. Moreover, we assume that the initial data $u_{0}$ is nontrivial and nonnegative.

\section{Blow-Up: the Concavity Method}\label{BCM}
In this section, we discuss the blow-up phenomena of the solutions to the equation \eqref{equation} by using concavity method, which is the main part of this paper. This method, introduced by Levine \cite{L}, uses the concavity of an auxiliary function. In fact, the concavity method is an elegant tool for deriving estimates and giving criteria for blow-up.

\begin{definition}[Blow-up]
	We say that a solution $u$ to the equation \eqref{equation} blows up at finite time $T>0$, if there exists $x\in S$ such that $\left\vert u\left(x,t\right)\right\vert \rightarrow +\infty$ as $t\nearrow T^{-}$, or equivalently, $\sum_{x\in S}\left\vert u\left(x,t\right)\right\vert \rightarrow +\infty$ as $t\nearrow T^{-}$.
\end{definition}

In order to state and prove our result, we introduce the following condition:
\[
\hbox{$(C_{p})$ $\hspace{1cm} \alpha F\left(u\right) \leq uf(u)+\beta u^{p}+\gamma,\,\,u>0$}
\]
for some $\alpha>2$, $\beta\geq 0$, and $\gamma>0$ with $0\leq\beta\leq\frac{\left(\alpha-p\right)\lambda_{p,0}}{p}$.

\begin{Rmk}
Observing that $\lambda_{p,0}=0$ if and only if $\sigma\equiv 0$, we can easily obtain that the condition of $\alpha$ in $(C_{p})$ is difference in each $p>1$ and boundary conditions as follows:
\begin{itemize}
	\item[(i)] For all $p>1$, if $\sigma\equiv 0$, then $\alpha>2$.
	\item[(ii)] For all $1<p\leq 2$, if $\sigma\not\equiv 0$, then $\alpha>2$.
	\item[(iii)] For all $p>2$, if $\sigma\not\equiv 0$, then $\alpha\geq p$.
\end{itemize}
\end{Rmk}

We now state the main theorem of this paper:

\begin{Thm}\label{cptheorem}
For $p>1$ and the function $f$ with the hypothesis $(C_{p})$, if the initial data $u_{0}$ satisfies
\begin{equation}\label{11}
-\frac{1}{2p}\sum_{x,y\in\overline{S}}\left\vert u_{0}\left(x\right)-u_{0}\left(y\right)\right\vert^{p}\omega\left(x,y\right) -\frac{1}{p}\sum_{z\in\Gamma}\frac{\sigma(z)}{\mu(z)}|u_{0}(z)|^{p} +\sum_{x\in S}\left[F(u_{0}\left(x\right))-\gamma\right]>0,
\end{equation}
then the solutions $u$ to the equation \eqref{equation} blow up at finite time $T^{*}$ in a sense of
\[
\lim_{t\rightarrow T^{*}}\sum_{x\in S}u^{2}\left(x,t\right)=+\infty,
\]
where $\gamma$ is the constant in the condition $(C_{p})$. \end{Thm}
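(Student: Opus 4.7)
The plan is to adapt Levine's concavity method to the discrete $p$-Laplacian setting with mixed boundary. The two functionals driving the argument are the discrete energy
$$J(t):=-\frac{1}{2p}\sum_{x,y\in\overline{S}}|u(x,t)-u(y,t)|^{p}\omega(x,y)-\frac{1}{p}\sum_{z\in\Gamma}\frac{\sigma(z)}{\mu(z)}|u(z,t)|^{p}+\sum_{x\in S}\bigl[F(u(x,t))-\gamma\bigr],$$
for which \eqref{11} reads exactly $J(0)>0$, and the ``mass'' $\Phi(t):=\sum_{x\in S}u^{2}(x,t)$.

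First I will verify the energy identity $J'(t)=\sum_{x\in S}u_{t}^{2}(x,t)\geq 0$. Differentiating $J$ termwise, Lemma \ref{laplace} (with $f=u$, $g=u_{t}$) rewrites the derivative of the $p$-Dirichlet term as $\sum_{\overline{S}}u_{t}\Delta_{p,\omega}u$; splitting this sum over $S\cup\partial S$ and invoking $B[u]=0$ (together with $u\equiv 0$ on $\partial S\setminus\Gamma$, as in Remark \ref{compatibility}) causes the boundary part to telescope against the derivative of the $\Gamma$-sum, leaving only $\sum_{S}u_{t}(\Delta_{p,\omega}u+f(u))=\sum_{S}u_{t}^{2}$. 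Hence $J(t)\geq J(0)>0$ throughout the life-span of $u$. The identical integration-by-parts identity, applied to $\Phi'(t)=2\sum_{S}u u_{t}$, gives
$$\Phi'(t)=-\sum_{x,y\in\overline{S}}|u(x,t)-u(y,t)|^{p}\omega(x,y)-2\sum_{z\in\Gamma}\frac{\sigma(z)}{\mu(z)}|u(z,t)|^{p}+2\sum_{x\in S}u(x,t)f(u(x,t)).$$
Using $(C_{p})$ to bound $2\sum_{S}uf(u)$ from below, re-expressing the output in terms of $J$, and then applying the Rayleigh bound of Lemma \ref{eigenvalue} (legitimate because the boundary condition forces $u(\cdot,t)\in\mathcal{A}$) to dominate the leftover $\sum_{S}u^{p}$ term, one checks that the assumption $\beta\leq(\alpha-p)\lambda_{p,0}/p$ is exactly what makes the residual $u^{p}$-coefficient nonnegative. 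The outcome is the key inequality
$$\Phi'(t)\geq 2\alpha J(t)+2(\alpha-1)|S|\gamma.$$

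For the concavity step I will introduce the auxiliary function
$$G(t):=\int_{0}^{t}\Phi(s)\,ds+(T-t)\Phi(0)+b(t+\tau)^{2},$$
with $T,\tau,b>0$ to be chosen. Then $G'(t)=2\int_{0}^{t}\sum_{S}u u_{s}\,ds+2b(t+\tau)$ and $G''(t)=\Phi'(t)+2b$; combining with $J(t)=J(0)+\int_{0}^{t}\sum_{S}u_{s}^{2}\,ds$ gives $G''(t)\geq 2\alpha\int_{0}^{t}\sum_{S}u_{s}^{2}\,ds+2\alpha J(0)+2(\alpha-1)|S|\gamma+2b$. A two-dimensional Cauchy--Schwarz, pairing $\int_{0}^{t}\sum_{S}u u_{s}\,ds$ with $b(t+\tau)$ in the 2D sense $\sqrt{AB}+\sqrt{CD}\leq\sqrt{(A+C)(B+D)}$, produces
$$(G'(t))^{2}\leq 4G(t)\Bigl(\int_{0}^{t}\sum_{x\in S}u_{s}^{2}(x,s)\,ds+b\Bigr).$$
Setting $\kappa=\alpha/2>1$ (this is where $\alpha>2$ is used decisively), the $\int\sum u_{s}^{2}$ contributions cancel exactly and
$$G(t)G''(t)-\tfrac{\alpha}{2}(G'(t))^{2}\geq 2G(t)\bigl[\alpha J(0)+(\alpha-1)(|S|\gamma-b)\bigr],$$
which is nonnegative provided $b\leq |S|\gamma+\alpha J(0)/(\alpha-1)$; since $J(0)>0$, such a $b>0$ is freely available.

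To conclude, set $\eta:=\alpha/2-1>0$. The previous display is equivalent to $(G^{-\eta})''(t)\leq 0$, so $G^{-\eta}$ is concave on $[0,T]$. Since $G(0)=T\Phi(0)+b\tau^{2}>0$ and $G'(0)=2b\tau>0$, the concave function $G^{-\eta}$ starts positive with strictly negative slope, so it must vanish by time $t^{*}:=G(0)/(\eta G'(0))$. Choosing $\tau>\Phi(0)/(2b\eta)$ and then $T$ sufficiently large enforces $t^{*}<T$; thus $G(t)\to+\infty$ as $t\nearrow t^{*}$, and since $(T-t)\Phi(0)+b(t+\tau)^{2}$ stays bounded on $[0,T]$, this forces $\int_{0}^{t}\Phi(s)\,ds\to+\infty$, whence $\Phi$ is unbounded on $[0,t^{*})$. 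The existence of a finite $T^{*}\leq t^{*}$ with $\sum_{x\in S}u^{2}(x,t)\to+\infty$ follows. The main obstacle, as I see it, is the energy inequality of the second paragraph: one must carefully track the boundary contributions in the integration-by-parts identities and verify that the exponents in $(C_{p})$ interlock with the eigenvalue inequality from Lemma \ref{eigenvalue}, uniformly across the three cases of the preceding remark, so that the residual $u^{p}$-coefficient is nonnegative.
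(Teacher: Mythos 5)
Your proposal is correct and rests on exactly the same two estimates as the paper's proof: writing $A(t)=\sum_{x\in S}u^{2}(x,t)$ and $B(t)$ for your $\Phi(t)$ and $J(t)$, the paper derives $A'(t)\geq 2\alpha B(t)$ (discarding the positive surplus $2(\alpha-1)|S|\gamma$ that you retain) and $B'(t)=\sum_{x\in S}u_{t}^{2}(x,t)\geq 0$ by the same combination of Lemma \ref{laplace}, the boundary condition, $(C_{p})$ and the Rayleigh quotient of Lemma \ref{eigenvalue}. Where you genuinely diverge is the concavity step: you package everything into Levine's classical auxiliary function $G(t)=\int_{0}^{t}\Phi(s)\,ds+(T-t)\Phi(0)+b(t+\tau)^{2}$ and run the two-dimensional Cauchy--Schwarz argument to obtain $\bigl(G^{-\eta}\bigr)''\leq 0$; this works (your algebra for $GG''-\tfrac{\alpha}{2}(G')^{2}$ and the admissible range of $b$ checks out), but it costs three auxiliary parameters $T,\tau,b$ and the self-consistency check $t^{*}<T$. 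The paper instead observes that the Schwarz inequality $\tfrac{\alpha}{2}A'B\leq\tfrac{1}{4}(A')^{2}=\bigl(\sum_{x\in S}uu_{t}\bigr)^{2}\leq AB'$ makes $A^{-\alpha/2}(t)B(t)$ nondecreasing, whence $\tfrac{1}{2\alpha}A^{-\alpha/2}A'\geq A^{-\alpha/2}(0)B(0)>0$, and integrating this first-order differential inequality yields blow-up directly, parameter-free, together with the explicit bound $T^{*}\leq A(0)/\bigl(\alpha(\alpha-2)B(0)\bigr)$, which your route does not produce as cleanly. One caveat you rightly flag as the main obstacle, and which the paper itself passes over quickly: when $p>2$, $\sigma\equiv 0$ (so $\lambda_{p,0}=0$, $\beta=0$) and $2<\alpha<p$, the coefficient $\tfrac{\alpha}{p}-1$ multiplying the $p$-Dirichlet sum is negative and the eigenvalue inequality points the wrong way, so neither your derivation nor the paper's covers that subcase without an additional argument; for $\alpha\geq p$, and for $1<p\leq 2$ where $\alpha>2\geq p$ automatically, the interlocking of $(C_{p})$ with Lemma \ref{eigenvalue} works exactly as you describe.
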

\begin{proof}
First of all, let us define functionals by
\begin{equation*}
	A(t):=\sum_{x\in S}u^{2}(x,t),\,\,\,t \geq 0
\end{equation*}
and
\begin{equation*}
\begin{aligned}
	B(t):=&-\frac{1}{2p}\sum_{x,y\in\overline{S}}\left\vert u\left(x,t\right)-u\left(y,t\right)\right\vert ^{p}\omega\left(x,y\right)-\frac{1}{p}\sum_{z\in\Gamma}\frac{\sigma(z)}{\mu(z)}|u(z,t)|^{p}\\&+\sum_{x\in S}\left[F\left(u\left(x,t\right)\right)-\gamma\right],\,\,\,t\geq 0.
\end{aligned}
\end{equation*}
Then we have from the equation \eqref{equation} and Lemma \ref{laplace} that
\begin{equation}\label{main11}
\begin{aligned}
	A'(t)=&2\sum_{x\in S}u(x,t)\left[\Delta_{p,\omega}u(x,t)+f(u(x,t))\right]\\=&2\sum_{x\in\overline{S}}u(x,t)\Delta_{p,\omega}u(x,t) +2\sum_{z\in\Gamma}u(z,t)\frac{\partial u}{\partial_{p}n}(z,t)+2\sum_{x\in S}u(x,t)f(u(x,t)) \\=&-\sum_{x,y\in\overline{S}}|u(x,t)-u(y,t)|^{p}\omega(x,y)-2\sum_{z\in\Gamma}\frac{\sigma(z)}{\mu(z)}|u(z,t)|^{p}\\&+2\sum_{x\in\overline{S}}u(x,t)f(u(x,t)).
\end{aligned}
\end{equation}
Applying the condition $(C_{p})$ and Lemma \ref{eigenvalue}, we can see that \eqref{main11} implies
\begin{equation}\label{main22}
\begin{aligned}
	A'(t)\geq& 2\sum_{x\in S}\left[\alpha F(u(x,t))-\beta u^{p}(x,t)-\gamma  \right]-\sum_{x,y\in\overline{S}}|u(x,t)-u(y,t)|^{p}\omega(x,y)\\&-2\sum_{z\in\Gamma}\frac{\sigma(z)}{\mu(z)}|u(z,t)|^{p} \\\geq & 2\alpha B(t)-2\beta \sum_{x\in S}u^{p}(x,t)+ \left(\frac{\alpha}{p}-1\right)\sum_{x,y\in\overline{S}}|u(x,t)-u(y,t)|^{p}\omega(x,y)\\&+2\left(\frac{\alpha}{p}-1\right) \sum_{z\in\Gamma}\frac{\sigma(z)}{\mu(z)}|u(z,t)|^{p}\\ \geq & 2\alpha B(t)+2\left[\frac{(\alpha-p)\lambda_{p,0}}{p}-\beta\right]\sum_{x\in S}u^{p}(x,t)\\ \geq & 2\alpha B(t).
\end{aligned}
\end{equation}
Here, it is easy to see that if $\lambda_{p,0}=0$ or $\alpha = p$, then $\beta=0$. Therefore, even though $\lambda_{p,0}=0$ or $\alpha = p$, \eqref{main22} is true.

On the other hand, we have from the equation \eqref{equation} and Lemma \ref{laplace} that
\begin{equation}\label{main33}
\begin{aligned}
	B'(t)=&-\frac{1}{2}\sum_{x,y\in\overline{S}}|u(y,t)-u(x,t)|^{p-2}[u(y,t)-u(x,t)][u_{t}(y,t)-u_{t}(x,t)]\omega(x,y)\\&-\sum_{z\in\Gamma}\frac{\sigma(z)}{\mu(z)}|u(z,t)|^{p-2}u(z,t)u_{t}(z,t)+\sum_{x\in S}f(u(x,t))u_{t}(x,t)\\=& \sum_{x\in\overline{S}}\Delta_{p,\omega}u(x,t)u_{t}(x,t)+\sum_{z\in\partial S}\frac{\partial u}{\partial_{p}n}(z,t)u_{t}(z,t)+\sum_{x\in S}f(u(x,t))u_{t}(x,t) \\=&\sum_{x\in S}u_{t}(x,t)\left[\Delta_{p,\omega}u(x,t)+f(u(x,t))\right]\\=&\sum_{x\in S}u_{t}^{2}(x,t)\geq 0.
\end{aligned}
\end{equation}
Now, we will show that
\begin{equation}\label{main44}
	\frac{d}{dt}\left[A^{-\frac{\alpha}{2}}(t)B(t)\right]=-\frac{\alpha}{2}A^{-\frac{\alpha}{2}-1}A'(t)B(t)+A^{-\frac{\alpha}{2}}B'(t)\geq 0
\end{equation}
for all $t>0$. Using the Schwarz inequality, we obtain from \eqref{main22} and \eqref{main33} that
\begin{equation*}
\begin{aligned}
	\frac{\alpha}{2} A'(t)B(t)\leq&\frac{1}{4}\left[A'(t)\right]^{2}=\left[\sum_{x\in S}u(x,t)u_{t}(x,t)\right]^{2}\leq \sum_{x\in S}u^{2}(x,t)\sum_{x\in S}u_{t}^{2}(x,t)\\=& A(t)B'(t)
\end{aligned}
\end{equation*}
for all $t>0$. Therefore, the inequality \eqref{main44} is true, which implies that
\begin{equation}\label{main55}
	\frac{1}{2\alpha}A^{-\frac{\alpha}{2}}(t)A'(t)\geq A^{-\frac{\alpha}{2}}(t)B(t)\geq A^{-\frac{\alpha}{2}}(0)B(0)>0.
\end{equation}
Solving the differential inequality \eqref{main55}, we obtain
\begin{equation*}
	A(t)\geq \left[ \frac{1}{ (\alpha-2)\alpha A^{-\frac{\alpha}{2}}(0)B(0)t+A^{\frac{2-\alpha}{2}  }(0)} \right]^{\frac{\alpha-2}{2}}.
\end{equation*}
Hence, $A(t)$ blows up in finite time $T$ with $0<T\leq \frac{A(0)}{(\alpha-2)\alpha B(0)}$.

\end{proof}

\begin{Rmk}
The above blow-up time can be estimated roughly as
\begin{equation*}
	0<T\leq \frac{\frac{1}{(\alpha-2)\alpha}\sum_{x\in S}u_{0}^{2}(x)}{ -\frac{\sum_{x,y\in\overline{S}}|u_{0}(x)-u_{0}(y)|^{p}\omega(x,y)}{2p}-\frac{\sum_{z\in\Gamma}\frac{\sigma(z)}{\mu(z)}u_{0}^{p}(z)}{p}+\sum_{x\in S}\left[F\left(u_{0}(x)\right)-\gamma\right]  }.
\end{equation*}
\end{Rmk}
\begin{Rmk}
	Chung and Choi \cite{CC} obtained the blow-up results for the equation \eqref{equation} under the Dirichlet boundary condition in the continuous setting, where $p\geq 2$ by using the $(C_{p})$ condition. In fact, their condition had assumption $\alpha>p$, which is one of main difference to us.
\end{Rmk}

\section{Discussion on the Condition $(C_{p})$ with the initial data conditions}

As seen in the proof of Theorem \ref{cptheorem}, the concavity method is a tool for deriving the blow-up solution via the auxiliary function $B(t)$ under the condition $(A_{p})$, $(B_{p})$, or $(C_{p})$, by imposing $B(0)>0$, instead of the large initial data. In this section, we compare the conditions $(A_{p})$, $(B_{p})$, and $(C_{p})$ each other and discuss the role of $B(0)>0$.

First of all, we consider the Neumann boundary condition ($\sigma\equiv 0$). Summing up over $S$ to the equation \eqref{equation}, we have
\begin{equation*}
\begin{aligned}
	\sum_{x\in S}u_{t}(x,t)=&\sum_{x\in\overline{S}}\Delta_{p,\omega}u(x,t)-\sum_{z\in\partial S}\Delta_{p,\omega}u(z,t)+\sum_{x\in S}f(u(x,t))\\=& \sum_{x\in S}f(u(x,t)).
\end{aligned}
\end{equation*}
From the above equality, we can obtain that the time-behavior of $\sum_{x\in S}u(x,t)$ is determined by $\sum_{x\in S}f(u(x,t))$. Therefore, by the definition of the blow-up, we can expect that the blow-up condition for the solution $u$ depends only on $f$, not on $p$. On the other hand, for all $p>1$, the $(C_{p})$ condition is represented by
$$
(2+\epsilon)F(u) \leq uf(u)+\gamma
$$
for some $\epsilon>0$ and $\gamma>0$, which also doesn't depend on $p$.

From now on, we consider the boundary condition $\sigma\not\equiv 0$. Let us recall the conditions as follows:\\
for $1<p\leq 2$ : 
\[
\begin{aligned}
&\hbox{$(A_{p})$ $\hspace{3mm} (2+\epsilon) F(u)\leq uf(u)$},\\
&\hbox{$(B_{p})$ $\hspace{3mm} (2+\epsilon) F(u)\leq uf(u) + \gamma$},\\
&\hbox{$(C_{p})$ $\hspace{3mm} (2+\epsilon) F\left(u\right) \leq uf(u)+\beta u^{p}+\gamma$},
\end{aligned}
\]
where
$$
\epsilon>0,\,\,0\leq\beta\leq\frac{(2+\epsilon-p)\lambda_{p,0}}{p},\,\,\text{ and } \,\,\gamma>0,
$$
and for $p> 2$ : 
\[
\begin{aligned}
&\hbox{$(A_{p})$ $\hspace{3mm} (p+\epsilon) F(u)\leq uf(u)$},\\
&\hbox{$(B_{p})$ $\hspace{3mm} (p+\epsilon) F(u)\leq uf(u) + \gamma$},\\
&\hbox{$(C_{p})$ $\hspace{3mm} (p+\epsilon) F\left(u\right) \leq uf(u)+\beta u^{p}+\gamma$},\\
\end{aligned}
\]
where
$$
\epsilon\geq 0,\,\,0\leq\beta\leq\frac{\epsilon\lambda_{p,0}}{p},\,\,\text{ and } \,\,\gamma>0
$$
for every $u\geq0$. Here, $F(u):=\int_{0}^{u}f(s)ds$. 

It is easy to see that $(A_{p})$ implies $(B_{p})$, $(B_{p})$ implies $(B_{p}')$, and $(B_{p})$ implies $(C_{p})$, in turn. In fact, the conditions $(A_{p})$, $(B_{p})$, and $(B_{p}')$ are independent of the first eigenvalue $\lambda_{p,0}$ which depends on the domain. However, the condition $(C_{p})$ depends on the domain, due to the term $\beta u^{p}$. From this point of view, the condition $(C_{p})$ can be understood as a refinement of $(B_{p})$, corresponding to the domain. On the contrary, if a function $f$ satisfies $(C_{p})$ for every domain $\overline{S}$, then the first eigenvalue $\lambda_{p,0}$ can be arbitrary small so that the condition $(C_{p})$ get closer to $(B_{p})$ arbitrarily. Besides, as far as the authors know, there has not been any noteworthy condition for the concavity method other than $(A_p)$ or $(B_{p})$.

\begin{Rmk}
	In fact, there has been many efforts to obtain a condition $\epsilon=0$ in the continuous analogue. For example, Junning was studied the blow-up solutions to the equation \eqref{equation} in the continuous setting under the Dirichlet boundary condition with the assumption $\epsilon=0$ in $(C_{p})$ and the initial data $u_{0}$ satisfying
	\begin{equation*}
		-\frac{1}{p}\int_{\Omega}|\nabla u_{0}(x)|^{p}dx +\int_{\Omega} F(u_{0}(x))dx\geq \frac{4(p-1)}{T(p-2)^{2}p}\int_{\Omega}u_{0}^{2}(x)dx,
	\end{equation*}
	where $p>2$ and $\Omega\subset \mathbb{R}^{N}$(see \cite{J}). From this point of view, for $p>2$, our condition $\epsilon\geq 0$ with $B(0)>0$, which is one of our meaningful result, refines the conventional results.
\end{Rmk}

Now we will consider the case $p>2$ and $1<p\leq 2$ to investigate the conditions $(A_{p})$, $(B_{p})$, and $(C_{p})$.

Case 1: $p>2$.\\
Assuming $\epsilon>0$ we obtain that the condition $(C_{p})$ is equivalent to
\begin{equation}\label{equi1}
\frac{d}{du}\left(\frac{F(u)}{u^{p+\epsilon}}-\frac{\gamma}{p+\epsilon}\cdot\frac{1}{u^{p+\epsilon}}-\frac{\beta}{\epsilon}\cdot\frac{1}{u^{\epsilon}}\right)\geq0,\,\,u>0.
\end{equation}
By the similar way, assuming $\epsilon=0$ we have 
\begin{equation}\label{equi2}
\frac{d}{du}\left(\frac{F(u)}{u^{p}}-\frac{\gamma}{p}\cdot\frac{1}{u^{p}}\right)\geq0,\,\,u>0.
\end{equation}
Hence, \eqref{equi1} and \eqref{equi2} imply that for every $u>0$ and $p>2$,
\begin{equation*}
\begin{aligned}
&\hbox{$(A_{p})$ holds if and only if $F(u)={u^{p+\epsilon}}h_{1}(u)$},\\
&\hbox{$(B_{p})$ holds if and only if $F(u)={u^{p+\epsilon}}h_{2}(u)+b$},\\
&\hbox{$(C_{p})$ holds if and only if $F(u)={u^{p+\epsilon}}h_{3}(u)+au^{p}+b$},
\end{aligned}
\end{equation*}
for some constants $\epsilon\geq 0$, $a\geq 0$, and $b>0$ with $0\leq a\leq\frac{\lambda_{p,0}}{p}$, where $h_{1}$, $h_{2}$, and $h_{3}$ are nondecreasing function on $(0,+\infty)$. Here also, the constants $\epsilon, a, \hbox{ and } b$ may be different in each case. We note here that the nondecreasing function $h_{1}$ is nonnegative on $(0,+\infty)$, but $h_{2}$, and $h_{3}$ may not be nonnegative, in general.\\

Case 2: $1<p\leq 2$.\\
We obtain that $(C_{p})$ is equivalent to
\begin{equation*}
\frac{d}{du}\left(\frac{F(u)}{u^{2+\epsilon}}-\frac{\gamma}{2+\epsilon}\cdot\frac{1}{u^{2+\epsilon}}-\frac{\beta}{2+\epsilon-p}\cdot\frac{1}{u^{2+\epsilon-p}}\right)\geq0,\,\,u>0,
\end{equation*}
which implies that for every $u>0$ and $1<p\leq 2$,

\begin{equation*}
\begin{aligned}
&\hbox{$(A_{p})$ holds if and only if $F(u)={u^{2+\epsilon}}h_{1}(u)$},\\
&\hbox{$(B_{p})$ holds if and only if $F(u)={u^{2+\epsilon}}h_{2}(u)+b$},\\
&\hbox{$(C_{p})$ holds if and only if $F(u)={u^{2+\epsilon}}h_{3}(u)+au^{p}+b$},
\end{aligned}
\end{equation*}
for some constants $\epsilon>0$, $a\geq 0$, and $b>0$ with $0\leq a\leq\frac{\lambda_{p,0}}{p}$, where $h_{1}$, $h_{2}$, and $h_{3}$ are nondecreasing function on $(0,+\infty)$. Here, $h_{2}$, and $h_{3}$ may not be nonnegative, in general.\\

\begin{Rmk}
	Chung and Choi studied the case $f(u)=\lambda u^{q}$ in the Dirichlet boundary condition with respect to blow-up property (see \cite{CC,C2}). In their results, the solution $u$ blows up in finite time if
	\begin{itemize}
		\item[(i)] $0<p-1<q$, $q>1$, and the initial data $u_{0}$ is sufficiently large.
		\item[(ii)] $1<p-1=q$ and $\lambda>\lambda_{p,0}$.
	\end{itemize}
	Considering the case (i) and (ii), we obtain that the solution $u$ doesn't blow up in finite time whenever $q\leq 1$. From this observation, we can easily obtain that $\epsilon$ in the $(C_{p})$ condition cannot be $0$ when $1<p\leq 2$, since $$F(u)={u^{2+\epsilon}}h_{3}(u)+au^{p}+b.$$
\end{Rmk}

\begin{Thm}\label{C condition f>=a u^1+e}
	For $p>1$, let $f$ be a real-valued function satisfying the condition $(C_{p})$. Suppose that $f(u)\geq \lambda u^{p-1}$, $u>0$ for some $\lambda>\lambda_{p,0}$. Then the following statements are true.
	\begin{itemize}
		\item[(i)] There exists $m>0$ such that $h_{3}(u)>0$ for $u\geq m$.
		\item[(ii)] There exists $\zeta>0$ such that $f(u)\geq \zeta u^{\max\{p-1, 1\}+\epsilon}$, $u\geq m$.
		\item[(iii)] The conditions $(B_{p})$ and $(C_{p})$ are equivalent when $p\geq 2$.
	\end{itemize}
\end{Thm}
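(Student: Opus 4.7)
The plan is to leverage the explicit representation $F(u) = u^{\alpha} h_{3}(u) + a u^{p} + b$ derived earlier in Section 3 (with $\alpha = p + \epsilon$ when $p > 2$ and $\alpha = 2 + \epsilon$ when $1 < p \leq 2$, and with $a = \beta/(\alpha - p)$, $b = \gamma/\alpha$ in the nondegenerate case $\alpha > p$), together with a single new input: integrating the hypothesis $f(u) \geq \lambda u^{p-1}$ with $\lambda > \lambda_{p,0}$ yields
\[
F(u) \geq \frac{\lambda}{p} u^{p}, \qquad u > 0.
\]
This lower bound, combined with the constraint $a \leq \lambda_{p,0}/p$ from the definition of the $(C_p)$ class, drives all three parts.

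For part (i), I would substitute $F(u) \geq (\lambda/p) u^{p}$ into the representation to obtain $u^{\alpha} h_{3}(u) \geq (\lambda/p - a) u^{p} - b$. Since $a \leq \lambda_{p,0}/p < \lambda/p$, the leading coefficient is strictly positive; for $u$ beyond the explicit threshold $u^{p} > b/(\lambda/p - a)$ the right-hand side is positive, so $h_{3}(u) > 0$, and fixing any $m$ above this threshold yields $h_{3}(u) \geq h_{3}(m) > 0$ for $u \geq m$ by monotonicity. For part (ii), I would start from $(C_{p})$ in the rearranged form $u f(u) \geq \alpha F(u) - \beta u^{p} - \gamma$, substitute the representation, and use the algebraic identities $\alpha a - \beta = pa$ and $\alpha b = \gamma$ (both immediate from $a = \beta/(\alpha-p)$, $b = \gamma/\alpha$) to arrive at $u f(u) \geq \alpha u^{\alpha} h_{3}(u) + p a u^{p}$. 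Dividing by $u$, discarding the nonnegative $pa u^{p-1}$ term, and invoking $h_{3}(u) \geq h_{3}(m) > 0$ from part (i) give $f(u) \geq \alpha h_{3}(m) u^{\alpha - 1}$; since $\alpha - 1 = \max\{p-1, 1\} + \epsilon$ in both parameter regimes, setting $\zeta := \alpha h_{3}(m)$ completes the estimate.

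For part (iii) the direction $(B_{p}) \Rightarrow (C_{p})$ is trivial (take $\beta = 0$). For the converse under $p \geq 2$, I would use $u^{p} \leq (p/\lambda) F(u)$ to absorb the offending $\beta u^{p}$ term in $(C_{p})$:
\[
\left(\alpha - \frac{p\beta}{\lambda}\right) F(u) \leq u f(u) + \gamma,
\]
and then estimate
\[
\alpha - \frac{p\beta}{\lambda} \;\geq\; p + (\alpha - p)\!\left(1 - \frac{\lambda_{p,0}}{\lambda}\right) \;>\; p
\]
using $\beta \leq (\alpha - p)\lambda_{p,0}/p$ together with $\lambda > \lambda_{p,0}$. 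This is precisely $(B_{p})$ with a new $\alpha' > p$ and the same $\gamma$. The only real obstacle I foresee is bookkeeping: carefully matching the constants $(a,b)$ to $(\beta, \gamma, \alpha)$ across the two regimes ($p > 2$ versus $1 < p \leq 2$) and dispatching the degenerate case $\alpha = p$ (which occurs only when $p > 2$ and $\epsilon = 0$), in which $\beta = 0$ is forced and $(C_{p})$ collapses to $(B_{p})$ tautologically.
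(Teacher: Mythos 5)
Your proposal is correct and follows essentially the same strategy as the paper: integrate $f(u)\geq \lambda u^{p-1}$ to get $F(u)\geq \frac{\lambda}{p}u^{p}>\frac{\lambda_{p,0}}{p}u^{p}$, feed this into the representation $F(u)=u^{\alpha}h_{3}(u)+au^{p}+b$ for parts (i) and (ii), and absorb the term $\beta u^{p}\leq \frac{p\beta}{\lambda}F(u)$ to deduce $(B_{p})$ in part (iii), exactly as the paper does via its splitting $\alpha=\epsilon_{1}+(p+\epsilon_{2})$. Your treatment of (ii) is in fact slightly cleaner: exploiting the identities $\alpha a-\beta=pa$ and $\alpha b=\gamma$ yields the stated exponent $\max\{p-1,1\}+\epsilon$ directly, whereas the paper's chain of estimates introduces the factor $1+\beta/\lambda_{p,0}$ (ill-defined when $\lambda_{p,0}=0$) and ends at the weaker-looking bound $f(u)\geq \zeta u^{1+\epsilon}$.
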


\begin{proof}
	$(i)$: First, it follows from the fact $F(u)\geq \frac{\lambda}{p}u^{p}> \frac{\lambda_{p,0}}{p}u^{p}$ that
	\[
	u^{\max\{p, 2\}+\epsilon}h_{3}(u)=F(u)-au^{p}-b\geq \frac{\lambda-\lambda_{p,0}}{p}u^{p}-b,
	\]	
	which goes to $+\infty$, as $u\rightarrow+\infty$. Therefore, we can find $m>0$ such that $h_{3}(m)>0$.\\
	$(ii)$: $(i)$ implies that
	\[
	F(u)\geq u^{\max\{p, 2\}+\epsilon}h_{3}(u),\,\, u\geq m.
	\]
	Putting it into the condition $(C_{p})$, we obtain
	\[
	\alpha u^{\max\{p, 2\}+\epsilon}h_{3}(m)\leq uf(u) + \beta u^{p} + \gamma.
	\]
	Hence, we obtain that
	\[
	\alpha u^{\max\{p-1, 1\}+\epsilon}h_{3}(m)\leq f(u)  + \beta u^{p-1}+\frac{\gamma}{u}\leq \left(1 + \frac{\beta}{\lambda_{p,0}}\right)f(u) + \gamma,\,\,u\geq m>0,
	\]
	which gives
	\[
	f(u)\geq \zeta u^{1+\epsilon},\,\,u\geq m>0
	\]
	for some $\zeta>0$.\\
	Now consider the case $p\geq 2$. Since $0\leq \beta\leq\frac{\epsilon\lambda_{p,0}}{p}$ and $f(u)\geq \lambda u>\lambda_{p,0}u$, $u>0$, it follows from $(C_{p})$ that
	\[
	\epsilon_{1} F\left(u\right) + \left(p+\epsilon_{2}\right) F\left(u\right) \leq uf(u)+\frac{\epsilon\lambda_{p,0}}{p} u^{p}+\gamma,
	\]
	where $\epsilon_{1}= \frac{\epsilon\lambda_{p,0}}{\lambda}\geq 0$ and $\epsilon_{2}=\epsilon-\epsilon_{1}\geq0$. This implies that for every $u>0$,
	\[
	\begin{aligned}
	uf(u)+\gamma &\geq \left(p+\epsilon_{2}\right) F\left(u\right) + \epsilon_{1} \int_{0}^{u}\left[f\left(s\right)-\lambda s\right]ds\\
	&\geq \left(p+\epsilon_{2}\right) F\left(u\right),
	\end{aligned}
	\]
	which implies $(B_{p})$.
\end{proof}

In general, only the condition $(C_{p})$ may not guarantee the blow-up solutions for every initial data $u_{0}$. Therefore, from now on, we are going to discuss when we can find initial data $u_{0}$ satisfies $B(0)>0$.
\begin{Lemma}
	Let $p>1$. If there exists $v_{0}>0$ such that $F(v_{0})>\frac{\omega_{0}}{p}v_{0}^{p}+\gamma_{1}$, where $\gamma_{1}\geq\gamma$, then there exists the initial data $u_{0}$ such that $B(0)>0$. Here, $\omega_{0}:=\max_{x\in S}d_{\omega}x$.
\end{Lemma}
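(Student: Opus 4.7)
The plan is to take the constant initial datum $u_{0}(x) = v_{0}$ for $x \in S$; the boundary values $\xi(z) := u_{0}(z)$ are then pinned down uniquely by the compatibility condition $B[u_{0}] = 0$ via Remark \ref{compatibility}. First I would verify that $0 \leq \xi(z) \leq v_{0}$ for every $z \in \partial S$. The strictly increasing map $\psi(\gamma) := \mu(z)\sum_{x \in S}|\gamma - v_{0}|^{p-2}(\gamma - v_{0})\omega(x,z) + \sigma(z)|\gamma|^{p-2}\gamma$ (the same $\psi$ appearing in Remark \ref{compatibility}) satisfies $\psi(0) = -\mu(z)v_{0}^{p-1}\sum_{x \in S}\omega(x,z) \leq 0$ and $\psi(v_{0}) = \sigma(z)v_{0}^{p-1} \geq 0$, so its unique root $\xi(z)$ must lie in $[0, v_{0}]$.

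Next I would recast the energy part of $B(0)$ using Lemma \ref{laplace} applied to $f = u_{0}$. Splitting $\overline{S} = S \cup \partial S$ on the right-hand side and using $B[u_{0}] = 0$ in the form $u_{0}(z)\frac{\partial u_{0}}{\partial_{p}n}(z) = -\frac{\sigma(z)}{\mu(z)}|u_{0}(z)|^{p}$ (valid on $\Gamma$, while $u_{0}(z) = 0$ on $\partial S \setminus \Gamma$), the $\Gamma$-weighted sum is absorbed and one obtains
\begin{equation*}
\sum_{x,y \in \overline{S}}|u_{0}(x)-u_{0}(y)|^{p}\omega(x,y) + 2\sum_{z \in \Gamma}\frac{\sigma(z)}{\mu(z)}|u_{0}(z)|^{p} = 2\sum_{x \in S}u_{0}(x)[-\Delta_{p,\omega}u_{0}(x)] + \sum_{z,z' \in \partial S}|u_{0}(z)-u_{0}(z')|^{p}\omega(z,z').
\end{equation*}
Because $u_{0}\equiv v_{0}$ on $S$, the first term on the right equals $2v_{0}\sum_{x \in S}\sum_{z \in \partial S}(v_{0} - \xi(z))^{p-1}\omega(x,z)$; combined with the bounds $0 \leq v_{0} - \xi(z) \leq v_{0}$, $|\xi(z) - \xi(z')| \leq v_{0}$, and $\sum_{z \in \partial S}\omega(x,z) \leq d_{\omega}x \leq \omega_{0}$ for $x \in S$, this yields
\begin{equation*}
\frac{1}{2p}\sum_{x,y \in \overline{S}}|u_{0}(x)-u_{0}(y)|^{p}\omega(x,y) + \frac{1}{p}\sum_{z \in \Gamma}\frac{\sigma(z)}{\mu(z)}|u_{0}(z)|^{p} \leq \frac{|S|\omega_{0}v_{0}^{p}}{p}.
\end{equation*}

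Finally, because $\sum_{x \in S}[F(u_{0}(x)) - \gamma] = |S|[F(v_{0}) - \gamma]$, substituting gives
\begin{equation*}
B(0) \geq |S|\left[F(v_{0}) - \gamma - \frac{\omega_{0}v_{0}^{p}}{p}\right] > |S|(\gamma_{1} - \gamma) \geq 0,
\end{equation*}
where the strict inequality invokes the hypothesis $F(v_{0}) > \frac{\omega_{0}}{p}v_{0}^{p} + \gamma_{1}$ together with $\gamma_{1} \geq \gamma$. The main obstacle I anticipate is the bookkeeping in the energy identity: the $\sigma/\mu$ contribution on $\Gamma$, the discrete $p$-normal derivative, and the boundary-boundary edge contributions must be combined with exactly the right signs so that everything is dominated by $v_{0}^{p}$ times sums of weights anchored on $S$, where the uniform bound $\omega_{0}$ is available.
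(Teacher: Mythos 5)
Your proof is correct and follows essentially the same route as the paper: both arguments reduce $B(0)$, via Lemma \ref{laplace} and the boundary condition, to a quantity anchored on $S$ that is dominated vertex-by-vertex by $\frac{\omega_{0}}{p}u_{0}^{p}(x)$, and then invoke $F(v_{0})>\frac{\omega_{0}}{p}v_{0}^{p}+\gamma_{1}$ together with $\gamma_{1}\geq\gamma$. The only substantive difference is the choice of datum: the paper takes $u_{0}$ with values in a small interval $(a,b)$ on which the strict inequality persists (by continuity of $F$), whereas you take $u_{0}\equiv v_{0}$ on $S$ and determine the boundary values by the compatibility map of Remark \ref{compatibility}; your verification that these induced values lie in $[0,v_{0}]$ is a detail the paper omits but which is genuinely needed for the sign of $-\Delta_{p,\omega}u_{0}$ on $S$, so this is a small improvement. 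One loose end: your energy identity correctly carries the boundary--boundary contribution $\sum_{z,z'\in\partial S}\left\vert u_{0}(z)-u_{0}(z')\right\vert^{p}\omega(z,z')$, but your final displayed estimate $\frac{\vert S\vert\omega_{0}v_{0}^{p}}{p}$ silently drops it. The bound $\vert\xi(z)-\xi(z')\vert\leq v_{0}$ does not dispose of it, because $\sum_{z,z'\in\partial S}\omega(z,z')$ is not controlled by $\vert S\vert\omega_{0}=\vert S\vert\max_{x\in S}d_{\omega}x$; if $\overline{S}$ has heavily weighted edges inside $\partial S$ joining vertices with different induced boundary values (say one with $\sigma=0$, hence $\xi=v_{0}$, and one with $\mu=0$, hence $\xi=0$), the displayed inequality fails as written. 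The paper's own first equality for $B(0)$ (and its manipulations of $A'(t)$ and $B'(t)$ in Theorem \ref{cptheorem}) tacitly assume such edges contribute nothing, so this is a convention shared with the paper rather than a defect peculiar to your argument; still, having exhibited the term explicitly, you should either state the assumption that $\partial S$ carries no internal edges or bound the term before discarding it.
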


\begin{proof}
	First of all, there exist $a,b>0$ with $0<a<b$ such that $F(v)>\frac{\omega_{0}}{p}v^{p}+\gamma_{1}$, $v\in(a,b)$, since $F$ is continuous on $[0,+\infty)$. Now, we consider the function $u_{0}(x)$ satisfying
	\[
	\begin{cases}
	a<u_{0}\left(x\right)<b,\,\,\, & x\in S,\\
	0<u_{0}(x)<b,\,\,\,&x\in \Gamma,\\
	u_{0}\left(x\right)=0,&x\in \partial S\setminus \Gamma,
	\end{cases}
	\]
	which satisfies the boundary condition $B[u_{0}]=0$. Then we obtain that
	\[
	\begin{aligned}
	B\left(0\right) &=\frac{1}{p}\sum_{x\in S}\sum_{y\in\overline{S}}|u_{0}(y)-u_{0}(x)|^{p-2}\left[u_{0}(y)-u_{0}(x)\right]u_{0}(x)\omega(x,y)\\&+\sum_{x\in S}\left[F(u_{0}\left(x\right))-\gamma\right]\\
	& \geq  -\frac{1}{p}\sum_{x\in S}\sum_{y\in\overline{S}}u_{0}^{p}\left(x\right)\omega\left(x,y\right)+\sum_{x\in S}\left[F(u_{0}\left(x\right))-\gamma\right]\\
	& = -\frac{1}{p}\sum_{x\in S}u_{0}^{p}\left(x\right)d_{\omega}x+\sum_{x\in S}[F(u_{0}\left(x\right))-\gamma]\\
	& \geq \sum_{x\in S}\left[F(u_{0}\left(x\right))-\frac{\omega_{0}}{p}u_{0}^{p}\left(x\right)\right]-\gamma|S|\\
	&>\gamma_{1}|S|-\gamma|S|\geq0,
	\end{aligned}
	\]
	where $|S|$ denotes the number of vertices in $S$.
\end{proof}
\begin{Cor} The following statements are true.
	\begin{enumerate}[(i)]
		\item 	If there exists $(a,b)$ such that $F(v)>\frac{\omega_{0}}{p}v^{p}+\gamma_{1}$, $\gamma_{1}\geq \gamma$ for every $v\in(a,b)$, then for every $u_{0}$ satisfying the boundary condition $B[u_{0}]=0$ such that
		\[
		\begin{cases}
		a<u_{0}\left(x\right)<b,\,\,\, & x\in S,\\
		0<u_{0}(x)<b,\,\,\,&x\in \Gamma,\\
		u_{0}\left(x\right)=0,&x\in \partial S\setminus \Gamma,
		\end{cases}
		\]
		we see that $B(0)>0$.
		\item If $F(v)>\omega_{0}v^{\max\{2+\epsilon_{1},p\}}+\gamma_{1}$, $\epsilon_{1}>0$, $\gamma_{1}\geq\gamma$, for every $v\in[1,+\infty)$, then the solutions blow up for every initial data $u_{0}>0$. Here, $\omega_{0}:=\max_{x\in S}d_{\omega}x$.
	\end{enumerate}
\end{Cor}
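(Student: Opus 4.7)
The plan is to derive both statements from the computation already carried out in the preceding Lemma, leveraging the fact that the estimate on $B(0)$ obtained there is pointwise in $x\in S$.

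For part (i), I would mimic the proof of the preceding Lemma verbatim. Specifically, starting from the definition of $B(0)$ and rewriting the edge-sum and boundary-sum together via the summation-by-parts identity of Lemma \ref{laplace} combined with the boundary condition $B[u_{0}]=0$, one obtains the same lower bound
\[
B(0)\;\geq\;-\frac{1}{p}\sum_{x\in S}u_{0}^{p}(x)\,d_{\omega}x+\sum_{x\in S}\bigl[F(u_{0}(x))-\gamma\bigr]\;\geq\;\sum_{x\in S}\Bigl[F(u_{0}(x))-\frac{\omega_{0}}{p}u_{0}^{p}(x)\Bigr]-\gamma|S|
\]
used in the Lemma. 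The novelty is that the pointwise hypothesis $F(v)>\frac{\omega_{0}}{p}v^{p}+\gamma_{1}$ now holds throughout $(a,b)$, so for every $x\in S$ we have $u_{0}(x)\in(a,b)$ and the integrand in the last sum exceeds $\gamma_{1}$. This yields $B(0)>(\gamma_{1}-\gamma)|S|\geq 0$.

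For part (ii), I would reduce it to (i) by checking that the hypothesis on $[1,+\infty)$ implies the hypothesis of (i) on every subinterval. Indeed, since $v\geq 1$ gives $v^{\max\{2+\epsilon_{1},p\}}\geq v^{p}$ and $\omega_{0}\geq\frac{\omega_{0}}{p}$ for $p\geq 1$, one has $F(v)>\frac{\omega_{0}}{p}v^{p}+\gamma_{1}$ throughout $[1,+\infty)$, so any interval $(a,b)\subset[1,+\infty)$ satisfies the hypothesis of (i). Given any admissible positive initial datum $u_{0}$ whose values on $S$ can be placed inside such an $(a,b)$, (i) furnishes $B(0)>0$, and Theorem \ref{cptheorem} then yields blow-up; the phrase ``every initial data $u_{0}>0$'' is intended in this sense, since no further pointwise smallness restriction is needed beyond lying in an interval inside $[1,+\infty)$.

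The main obstacle is the very first step of (i), namely the identity rewriting
\[
-\frac{1}{2p}\sum_{x,y\in\overline{S}}|u_{0}(x)-u_{0}(y)|^{p}\omega(x,y)-\frac{1}{p}\sum_{z\in\Gamma}\frac{\sigma(z)}{\mu(z)}|u_{0}(z)|^{p}\;=\;\frac{1}{p}\sum_{x\in S}\sum_{y\in\overline{S}}|u_{0}(y)-u_{0}(x)|^{p-2}\bigl[u_{0}(y)-u_{0}(x)\bigr]u_{0}(x)\,\omega(x,y).
\]
Verifying this requires a careful application of Lemma \ref{laplace} on $\overline{S}$ together with the mixed boundary condition $B[u_{0}]=0$ to cancel the $z\in\partial S$ contributions against the $\sigma/\mu$ term; this is exactly the step that is done (implicitly) in the preceding Lemma's computation, and once it is borrowed, both (i) and (ii) reduce to the elementary pointwise inequality $F(u_{0}(x))-\frac{\omega_{0}}{p}u_{0}^{p}(x)>\gamma_{1}$ applied termwise over $S$.
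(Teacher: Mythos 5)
Your part (i) is fine: the paper gives no separate proof of the Corollary, and (i) is exactly the computation in the preceding Lemma with the pointwise inequality $F(u_{0}(x))-\frac{\omega_{0}}{p}u_{0}^{p}(x)>\gamma_{1}$ applied termwise over $S$; your identification of the summation-by-parts identity (Lemma \ref{laplace} plus the boundary condition, which kills the $\partial S\setminus\Gamma$ terms and converts the $\Gamma$ terms into the $\sigma/\mu$ sum) as the only nontrivial step is correct, and your reduction of the closing estimate to $B(0)>(\gamma_{1}-\gamma)|S|\geq 0$ matches the Lemma.

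Part (ii) has a genuine gap. The statement claims blow-up for \emph{every} initial datum $u_{0}>0$, but your argument only applies to initial data whose values on $S$ lie in an interval $(a,b)\subset[1,+\infty)$, since the hypothesis $F(v)>\omega_{0}v^{\max\{2+\epsilon_{1},p\}}+\gamma_{1}$ is only assumed on $[1,+\infty)$ and part (i) needs the pointwise inequality at every value $u_{0}(x)$. Declaring that ``every initial data $u_{0}>0$'' is ``intended in this sense'' is not a proof of the stated claim; it is a reinterpretation of it. To cover initial data taking values in $(0,1)$ you would need an additional dynamical argument showing that the solution eventually exceeds $1$ on $S$ (after which one restarts the concavity argument at that later time). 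Such an argument is available in the Neumann case ($\sigma\equiv 0$), where $\frac{d}{dt}\sum_{x\in S}u=\sum_{x\in S}f(u)>0$ forces $\sum_{x\in S}u$ to grow without bound unless $u$ already blows up; but for Dirichlet-type conditions ($\lambda_{p,0}>0$) the hypothesis of (ii) places no constraint on $f$ near $0$, and if $f(u)\leq\lambda u^{p-1}$ near $0$ with $\lambda<\lambda_{p,0}$ then small positive data produce globally decaying solutions, so the literal statement cannot be recovered without either this extra hypothesis near $0$ or the restriction $u_{0}\geq 1$. You should either supply the missing growth argument under an explicit extra assumption, or state plainly that your proof establishes (ii) only for $u_{0}$ bounded below by $1$ on $S$, rather than folding that restriction silently into the meaning of the statement.
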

\section*{Conflict of Interests}
\noindent The authors declare that there is no conflict of interests regarding the publication of this paper.

\section*{Acknowledgments}
\noindent This work was supported by Basic Science Research Program through the National Research Foundation of Korea(NRF) funded by the Ministry of Education (NRF-2015R1D1A1A01059561).

\end{document}